\newcommand{\conv}{\mathrm{conv}}
\newcommand{\proj}{\mathrm{proj}}
\newcommand{\dist}{\mathrm{dist}}
\newcommand{\cl}{\mathrm{cl}\,}
\newcommand{\QQ}{\mathbb{Q}}
\newcommand{\SSS}{\mathcal{S}}
\newcommand{\RR}{\mathbb{R}}
\newcommand{\NN}{\mathbb{N}}
\newcommand{\dd}{{\rm d}}
\newcommand{\partialc}{\partial^c}
\newtheorem{theorem}{Theorem}
\newtheorem{proposition}[theorem]{Proposition}
\newtheorem{lemma}[theorem]{Lemma}
\newtheorem{corollary}[theorem]{Corollary}
\newtheorem{example}{Example}
\newtheorem{remark}{Remark}
\newtheorem{definition}{Definition}%
\newenvironment{proof}[1][]{\noindent {\bf Proof #1:\;}}{\hfill $\Box$}
\providecommand{\keywords}[1]{\textbf{\textbf{Keywords. }} #1}
\newcommand{\jac}{\mathrm{Jac}\,}
\newcommand{\alg}{$\mathrm{RM}$}
\title{Conservative parametric optimality and the ridge method for tame min-max problems}
\begin{document}

\author{Edouard Pauwels\thanks{IRIT, Universit\'e de Toulouse, CNRS, Institut Universitaire de France (IUF). France.}}

\date{\today}

\maketitle

\begin{abstract}
	
				We study the ridge method for min-max problems, and investigate its convergence without any convexity, differentiability or qualification assumption. 
				The central issue is to determine whether the ``parametric optimality formula'' provides a  conservative gradient, a notion of generalized derivative well suited for optimization. The answer to this question is positive in a semi-algebraic, and more generally definable, context. As a consequence, the ridge method applied to definable objectives is proved to have a minimizing behavior and to converge to a set of equilibria which satisfy an optimality condition. Definability is key to our proof: we show that for a more general class of nonsmooth functions, conservativity of the parametric optimality formula may fail, resulting in an absurd behavior of the ridge method.
\end{abstract}
\keywords{min-max problems, ridge algorithm, parametric optimality, conservative gradients, definable sets, o-minimal structures, Clarke subdifferential, First order methods}

\section{Introduction}
\subsection{Main result}
\label{sec:introduction}
We consider unconstrained minimization of an objective function:
\begin{align}
				f \colon x &\to \max_{y \in \RR^r} F(x,y),
				\label{eq:valueFunction}
\end{align}
where $F \colon \RR^p \times \RR^r \to \RR$ is locally Lipschitz and achieves its maximum \footnote{The maximum in \eqref{eq:valueFunction} is arbitrary and could be reversed into a minimum.} in $y$ for all $x \in \RR^p$ such that the argmax exists in a locally bounded set. The function $f$ will be called the value function, note that in this situation, $f$ is also locally Lipschitz. These notations and assumptions will be standing throughout this paper. We consider the Ridge Method (RM)\footnote{We are not aware of detailed description of this method in the literature, although the idea existed in discussions. We chose the name "Ridge Method" from oral transmition.}, initialized with $x_0 \in \RR^p$, it is defined recursively as follows, for all $k \in \NN$,
\begin{align}
				y_k &\in \arg\max_{y \in \RR^r} F(x_k,y) \nonumber\\
				(u_k, 0) &\in \partialc F(x_k, y_k)\tag{\alg} \label{eq:ridgeAlgorithm}\\
				x_{k+1} &= x_k - \alpha_k u_k\nonumber,
\end{align}
where $(\alpha_k)_{k\in \NN}$ is a nonsummable sequence of positive step sizes tending to $0$ and $\partialc$ denotes the Clarke subdifferential \cite{clarke1983optimization}.
The existence of the update direction $u_k$ in \eqref{eq:ridgeAlgorithm} is ensured by the Parametric Optimality (PO) formula for subgradient of partial maxima. The algorithm relies on the knowledge of
\begin{itemize}
				\item {\bf A partial maximization oracle}, which associates to any $x \in \RR^p$, an element of the set $P(x) := {\arg\max}_{y \in \RR^r} F(x,y)$, assumed to be nonempty.
				\item {\bf A first order oracle}, which associates to any $(x,y) \in \RR^p \times \RR^r$ first order information about $F$ in the form of its Clarke subdifferential, $\partialc F$.
\end{itemize}

For most locally Lipschitz functions $F$, the Clarke subdifferential, $\partialc F$ \cite{clarke1983optimization} carries absolutely no information about the function itself and is therefore useless from a computational perspective \cite{wang1995pathological,borwein2001generalized,borwein2017genralisations}. For our purpose, we need to restrict $F$ to be in a subclass which is well behaved with respect to subdifferentiation, we choose the class of path differentiable functions, which was identified by several authors to be well behaved in terms of subgradient differential inclusion \cite{valadier1989entrainement,borwein1998chain,davis2018stochastic,bolte2020conservative}. 

The purpose of this paper is to investigate asymptotic behavior of \eqref{eq:ridgeAlgorithm}. The main results are the following.
\begin{itemize}
				\item When applied to a large and widespread subclass of functions $F$, for example semi-algebraic functions, and more generally definable functions, Algorithm \eqref{eq:ridgeAlgorithm} has a minimizing behavior. For bounded sequences, the value function, $f(x_k)$, is converging and accumulation points, $(\bar{x}, \bar{y})$, of the sequence $(x_k,y_k)_{k \in \NN}$ are equilibria which satisfy the optimality condition:
				\begin{align*}
								0 &\in \mathrm{conv} \{u,\, (u,0) \in \partialc F(\bar{x},y), \, y \in \arg\max_{z \in \RR^r} F(\bar{x},z)\} \\
								\bar{y} &\in \arg\max_{y \in \RR^r} F(\bar{x},y)
				\end{align*}
				Let us stress that most objectives found in applications are definable, see \cite{attouch2013convergence,bolte2014proximal} for discussion and examples, and \cite{bolte2020mathematical,bolte2020conservative} for a recent account in deep learning. 
				\item Without definability assumption, for general path-differentiable functions, the algorithm may fail to have any minimizing property. We construct a Lipschitz function $F$ which is path-differentiable, such that all inputs $x$ are steady states for \eqref{eq:ridgeAlgorithm} but not critical in any reasonable sense for $f$. This underlines the importance of the definability assumption in the previous result and shows that Algorithm \eqref{eq:ridgeAlgorithm} requires to work with proper subclasses, beyond Lipschicity and path-differentiability.
\end{itemize}

\subsection{Parametric optimality and nonsmooth differential calculus}
In order to analyse algorithm \eqref{eq:ridgeAlgorithm}, we need a variational model for parametric optimality and calculus rules providing access to first order information for $f$, from the knowledge of the partial maximization oracle and the subgradient of $F$. Our main candidate is the parametric optimality formula, which was described in \cite[Corollary 3.1.1]{rockafellar1985extensions} for partial minimization when $F$ is level bounded in $y$ locally uniformly in $x$, see also \cite[Theorem 10.13]{rockafellar1998variational}. Clarke subdifferential is non directional, and therefore, the formula is also valid for partial maximization. Under our setting, it ensures that for all $x \in \RR^p$
\begin{align}
				\partialc f(x)	\subset \mathrm{conv} \{u,\, (u,0) \in \partialc F(x,y), \, y \in P(x)\}. \tag{PO}
				\label{eq:partialOptimality}
\end{align}
The formula is in fact a result describing the subgradient of $f$, but we will be interested mostly in its right hand side which will be referred to as the parametric optimality formula (PO formula)
\footnote{\textbf{Envelope formula:} the PO formula is very similar to the envelope formula: $\partialc f(x)	\subset \mathrm{conv} \{u \in \partialc_x F(x,y), \, y \in P(x)\}$, \cite[Theorem 2.8.2]{clarke1983optimization} where $\partialc_x$ denotes the subgradient for fixed $y$. This formula holds with equality if $F$ is convex in $x$, see\cite[Proposition A.22]{bertsekas1971control}\cite{demyanov1966solution,danskin1966theory}, and more generally regular \cite[Theorem 2.8.2]{clarke1983optimization}. This is simpler and computationally more advantageous than the PO formula. However, it very much depends on convexity and is too coarse in general for our purposes, consider for example $F(x,y) = -\vert x-y\vert$, the envelope formula gives $[-1,1]$ for all $x$ and $f(x) = \max_y F(x,y) = 0$ for all $x$.}. 
Indeed, any element given by the PO formula can be computed from the knowledge of maximization and first order oracles mentioned above and precisely corresponds to the search direction chosen in Algorithm \eqref{eq:ridgeAlgorithm}.

As noted in \cite[Theorem 10.13]{rockafellar1998variational}, the PO formula is sharp, it holds with equality in the case of a concave $F$ (jointly in $(x,y)$).
However in general nonconvex settings, without further qualification assumptions, the PO formula does not hold with equality\footnote{
\textbf{Failure of partial optimality formula:} 
\label{ex:failure1}
Consider the function $F \colon (x,y) \mapsto  -y \min\{ \vert x\vert,1\} + \min\{0,y\}$, which is concave in $y$.
We have for all $x\in \RR$, $f(x) = \max_{y \in \RR} F(x,y)= 0$. Yet fixing $x = 0$, we have $F(0,y) = 0$ for all $y > 0$, all such $y$ being partial maximizers. However $([-y,y], 0) \subset \partialc F(0,y)$ for any $y > 0$. This shows that any $s \in \RR$ is compatible with the PO formula at $x=0$, yet the corresponding Clarke subdifferential is only the singleton $\{0\}$, which coincides with the classical derivative. 
}.
Therefore, the PO formula does not necessarily provide a subgradient, this constitutes the main difficulty in analyzing Algorithm \eqref{eq:ridgeAlgorithm}. We choose to use conservativity, a notion of generalized derivative which was recently introduced as a nonsmooth analysis tool which is compatible with differential calculus \cite{bolte2020conservative}.
Most importantly, conservative gradients may be used in place of subgradients for first order optimization \cite{castera2019inertial,bolte2020conservative,bolte2020mathematical}, making them a natural candidate for algorithmic oracles in our context.
With this in mind, the proposed analysis of Algorithm \eqref{eq:ridgeAlgorithm} boils down to the central question:
\begin{center}
				\emph{For a path-differentiable $F$, does the PO formula define a conservative gradient for $f$?}
\end{center}
The answer to this question is negative in general, we provide a counterexample. However for definable functions the answer turns out to be positive. The proof of the latter result relies on a characterization of definable conservative gradients based only on definable paths, which is of independent interest (see also remark \ref{rem:davis} and \cite{davis2021conservative}). In the context of conservativity, the definable case plays a special role as it is widepread in applications \cite{bolte2020conservative,bolte2020mathematical} and many further properties are available \cite{bolte2020mathematical,lewis2021structure,davis2021conservative}. The reader unfamiliar with definability may consider instead semialgebraicity, which is a special case, a function being semialgebraic when its graph can be represented as the finite union of solution sets of polynomial systems involving finitely many equalities and inequalities. 
Section \ref{sec:presentation} exposes basic definitions and more details regarding definability. 

Let us describe a few applications and consequences of these results:
\begin{itemize}
				\item If $F$ is given in the form of a composition of elementary definable functions, Lipschitz and concave in its second argument, then the maximization oracle could be given by an optimization solver which adresses the maximization problem to global optimality, and the sugradient oracle could be given by algorithmic differentiation (see remark \ref{rem:definableConservativeField}). We ensure that the ridge method \eqref{eq:ridgeAlgorithm} is attracted by stationary points in this case and actually most sequences are attracted by Clarke critical points \cite{bolte2020mathematical,bianchi2020convergence}.
				\item In the previous example, concavity is instrumental, to ensure that global maximization is reasonable. Actually, the only requirement is to have a global maximization oracle, for which we could consider global optimization examples beyond concavity.
				\item In both cases, the \eqref{eq:partialOptimality} formula evaluated at $x$ is a singleton equal to $\nabla f(x)$, the classical gradient, everywhere outside of a finite union of differentiable manifolds of dimension strictly less than $p$ (a negligible set). This is a consequence of \cite[Theorem 1]{bolte2020mathematical} and illustrates the fact that, although the \ref{eq:partialOptimality} formula may produce artifacts, it occurs very rarely.
				\item Finally, conservative gradients satisfy Fermat rule, using Carath\'eodory theorem, this translates in an optimality condition as follows: if $x \in \RR^p$ is a local minimum of $f$, then there exists $(y_i,u_i,\lambda_i)_{i=1}^{p+1}$ such that 
								\begin{align}
												&y_i \in \arg\max_{y \in \RR^r} F(x,y),\qquad (u_i,0) \in \partial^c F(x, y_i), \qquad \lambda_i \in [0,1],\qquad i= 1,\ldots, p+1 \nonumber\\
												&\sum_{i=1}^{p+1} \lambda_i u_i = 0,\qquad \sum_{i=1}^{p+1} \lambda_i = 1.
												\label{eq:optiCondition}
								\end{align}
								This constitutes a notion of stationarity for nonsmooth min-max problems which could be a target for future algorithmic developments.
\end{itemize}

\subsection{Applications of the main results and existing literature}
Min-max problems arise in machine learning applications with Generative Adversarial Networks (GANs) \cite{goodfellow2014generative,arjovsky2017wasserstein}, adversarial training of deep networks \cite{szegedy2013intriguing,goodfellow2015explaining}, and further applications \cite{ablin2020super} bolstering reasearch on algorithms for min-max problems, see for example \cite{wang2020solving} for a ridge-type method. Most of these applications amount to solving nonsmooth min-max problems, with non convex or concave definable objectives, based on first order methods and algorithmic differentiation oracles. Our results describe a notion of equilibrium for such problem as well as an algorithm to reach such equilibria, under assumptions which are general enough to encompass most machine learning applications \cite{bolte2020mathematical}. 

Our main results are actually stated in terms of conservative gradients, a notion which has been shown to be compatible with the rules of differential calculus, contrary to the notion of subdifferential \cite{bolte2020conservative,bolte2020mathematical}. Beyond min-max problems, the fact that the PO formula defines a conservative gradient can be used in compositional modeling, in combination with algorithmic differentiation. This is in close connection with emerging extensions of deep neural networks which include optimization problems within their formulation, some of the network layers being defined as partial maxima or minima \cite{amos2017optnet,agrawal2019differentiable,berthet2020learning}. Conservativity of the PO formula provides theoretical ground to develop algorithmic differentiation tools for compositional problems involving such max structured functions.

Stationarity for min-max problem is an important research topic, a large literature is devoted to convex-concave min-max problems, which we will not describe as we consider a much broader class. In the nonconvex case, it is particularly interesting to restrict $F$ to be twice continuously differentiable and strongly convex in $y$ for each fixed $x$, as the value function $f$ and the argmax mapping are differentiable in this case (using for example the implicit function theorem). Closer to our interest is the situation where the maximization problem is merely convex in $y$ for each fixed $x$. In this case, even if $F$ is smooth, the value function $f$ may not be differentiable. The work of \cite{rafique2021weakly} consider min-max structured problem with $F$ concave in $y$ and weakly convex in $x$. They observe that in this case the value function is weakly convex and propose a dedicated algorithm. It turns out that the envelope formula remains valid with equality for such problems (see for example \cite[Lemma 4.7]{lin2020gradient} which is a special case of \cite[Theorem 2.8.2]{clarke1983optimization}). Since functions with Lipschitz gradient are regular (actually weakly convex) these observations allowed to develop a variety of algorithms and analyses for twice differentiable $F$, concave in its second argument. Most proposed approach rely on equality in the envelope formula and the Moreau envelope of the value function \cite{ostrovskii2021efficient,kong2019accelerated,thekumparampil2019efficient,lin2020gradient,jin2020what,rafique2021weakly}. These provide a precise picture regarding stationarity and algorithms for smooth min max problems with concave maximization component. The present work departs from this literature because it relies neither on smoothness nor on a form of concavity or weak convexity and the obtained results are only qualitative.

\section{Presentation of the main results}
\label{sec:presentation}

\subsection{Technical preliminary}
\subsubsection{Nonsmooth analysis and conservative gradients}
For any integer $p \in \NN$, we use the following notations. We denote by $\left\langle \cdot,\cdot\right\rangle$ be the canonical Euclidean scalar product on $\RR^p$ and $\|\cdot\|$ its associated norm. A locally Lipschitz continuous function, $f \colon \RR^p \to \RR$ is differentiable almost everywhere by Rademacher's theorem, see for example \cite{evans2015measure}. Denote by $R \subset \RR^p$, the full measure set where $f$ is differentiable, then the Clarke subdifferential \cite{clarke1983optimization} of $f$ is given for any $x \in \RR^p$, by
\begin{align*}
				\partialc f(x) = \mathrm{conv} \left\{ v \in \RR^p,\, \exists  y_k \underset{k \to \infty}{\to} x \text{ with } y_k \in R,\, v_k = \nabla f(y_k) \underset{k \to \infty}{\to} v \right\}.
\end{align*}
A set valued map $D\colon \RR^p \rightrightarrows \RR^q$ is a function from $\RR^p$ to  the set of subsets of $\RR^q$. The graph of $D$ is given by
\begin{align*}
				\mathrm{graph}\,D = \left\{ (x,z):\, x \in \RR^p,\, z \in D(x)\right\}.
\end{align*}
$D$ is said to have {\em closed graph} or to be {\em graph closed} if $\mathrm{graph}\, D$ is closed as a subset of $\RR^{p + q}$. An equivalent characterization is that for any converging sequences $\left( x_k \right)_{k\in \NN}$, $\left( v_k \right)_{k\in \NN}$ in $\RR^p$, with $v_k \in D(x_k)$ for all $k \in \NN$, we have
\begin{align*}
				\lim_{k \to \infty} v_k \in D(\lim_{k\to\infty}x_k).
\end{align*}
$D$ is said to be locally bounded if for each compact $K \subset \RR^p$, there is $M > 0$ such that $\|v\|\leq M$ for all $v \in D(x)$ for all $x \in K$.
An {\em absolutely continuous curve} is a continuous function $x \colon \RR \to \RR^p$ which admits a derivative $\dot{x}$ for Lebesgue almost all $t \in \RR$, (in which case $\dot{x}$ is Lebesgue measurable), and $x(t) - x(0)$ is the Lebesgue integral of $\dot{x}$ between $0$ and $t$ for all $t \in \RR$. 

These elements allow to define the notion conservativity of set valued mappings \cite{bolte2020conservative}.
\begin{definition}[Conservative gradients]
			Let $D \colon \RR^p \rightrightarrows \RR^p$ be a set valued map with closed graph, non empty and locally bounded values and $f \colon \RR^p \to \RR$ a locally Lipschitz function. Then $f$ is a potential for $D$ if for all $x \in \RR^p$, all $\gamma \colon [0,1] \to \RR^p$, absolutely continuous with $\gamma(0) = 0$ and $\gamma(1) = x$, and all measurable functions, $v \colon [0,1] \to \RR^p$, such that $v(t) \in D(\gamma(t))$ for all $t \in [0,1]$,
			\begin{eqnarray}
			f(x)& = & f(0)+\int_0^1 \left\langle \dot{\gamma}(t), v(t) \right\rangle \dd t.
				\label{eq:conservative}
			\end{eqnarray}
			We shall also say that $D$ is a conservative gradient for~$f$ or simply a conservative gradient. Such functions $f$ are called path differentiable. The function $v$ in \eqref{eq:conservative} will be called a measurable selection of $D \circ \gamma$. 
		\label{def:conservativeMapForF}
\end{definition}
The result of \cite[Corollary 1]{bolte2020conservative} ensures that for a path differentiable $f$, $\partialc f$ is a conservative gradient. Note that in Definition \ref{def:conservativeMapForF}, if $f$ is not assumed to be locally Lipschitz a priori, the existence of a locally bounded conservative gradient ensures that it is locally Lipschitz, for instance by integration along segments.

\subsubsection{O-minimal structures}
Important references on this topic are \cite{Cos99,dries1996geometric}.
An {\em o-minimal structure} on $(\RR,+,\cdot)$ is a collection of sets
$\mathcal{O} = (\mathcal{O}_p)_{p \in \NN}$ where each $\mathcal{O}_p$ is itself a family of
subsets of $\RR^p$, such that for each $p \in \NN$:
\begin{enumerate}
\item[(i)] $\mathcal{O}_p$ is stable by complementation, finite union, finite intersection.
  \item[(ii)]  if $A$ belongs to $\mathcal{O}_p$, then both $A \times \RR$ and $\RR \times A$
    belong to $\mathcal{O}_{p+1}$;
  \item[(iii)]  if $\pi: \RR^{p+1} \to \RR^p$ is the canonical projection onto $\RR^p$ then,
    for any $A \in \mathcal{O}_{p+1}$, the set $\pi(A)$ belongs to $\mathcal{O}_p$;
    \label{it:algebraic}
  \item[(iv)]  $\mathcal{O}_p$ contains the family of real algebraic subsets of $\RR^p$, that is,
    every set of the form
    \[
      \{ x \in \RR^p \mid g(x) = 0 \}
    \]
    where $g: \RR^p \to \RR$ is a polynomial function;
  \item[(v)]  the elements of $\mathcal{O}_1$ are exactly the finite unions of  intervals.
\end{enumerate}

A subset of $\RR^p$ which belongs to an o-minimal structure $\mathcal{O}$ is said to be {\em definable in $\mathcal{O}$}. A function is {\em definable in $\mathcal{O}$} whenever its graph is definable in $\mathcal{O}$. A set valued mapping (or a function) is said to be definable in $\mathcal{O}$ whenever its graph is definable in $\mathcal{O}$. The terminology {\em tame} refers to definability in an o-minimal structure without specifying which structure. From now on we fix an o-minimal structure $\mathcal{O}$, definable sets being implicitly definable in $\mathcal{O}$.

The simplest  o-minimal structure is given by the class of real  semialgebraic objects.
Recall that a set $A \subset \RR^p$ is called {\em semialgebraic} if it is a finite union of sets of the form $$\displaystyle  \bigcap_{i=1}^k \{x \in \RR^p \mid g_{i}(x) < 0, \; h_{i}(x) = 0 \}$$
where the functions $g_{i}, h_{i}: \RR^p \to \RR$ are real polynomial functions and $k\geq 1$.
The key tool to show that these sets form an o-minimal structure is Tarski-Seidenberg principle which ensures that (iii) holds true.  As detailed in \cite{Cos99} this result can be expressed in the following way.
 \begin{proposition}[Quantifier elimination]
				 Any first order formula (quantification on variables only) involving polynomials, equalities and inequalities, definable functions and definable sets, describes a definable set.
							\label{prop:quantifierElimination}
 \end{proposition}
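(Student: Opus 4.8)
The plan is to proceed by structural induction on the formula, using the o-minimal axioms (i)--(v) as the exact closure properties matching each logical construct: the Boolean connectives correspond to axiom (i), the introduction of auxiliary free variables to axiom (ii), and the elimination of an existential quantifier to the projection axiom (iii), while the universal quantifier is handled through the equivalence $\forall \equiv \neg\exists\neg$. Throughout, a formula $\phi$ with free variables among $x_1,\dots,x_p$ is associated with the set $\{x \in \RR^p : \phi(x) \text{ holds}\}$, and the goal is to show that this set is definable.

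For the base case I would first normalize the atomic formulas so that every term is a variable or a polynomial in variables, pushing all definable function symbols into graph conditions. Concretely, an atomic formula such as $g(x) \bowtie 0$ with $g$ definable and $\bowtie \in \{=,<\}$ is rewritten as $\exists z\,\big( (x,z) \in \mathrm{graph}\, g \ \wedge\ z \bowtie 0 \big)$; since $\mathrm{graph}\, g$ is definable by hypothesis, this reduces function symbols to connectives and quantifiers already slated for treatment, applied to the definable predicate $z \bowtie 0$. It then remains to treat the purely polynomial atoms. Equalities $\{x : P(x)=0\}$ are definable directly by axiom (iv). For a strict inequality $\{x : P(x)<0\}$ I would write it as the projection $\pi\big(\{(x,t) : P(x)-t = 0\}\cap (\RR^p \times \{t<0\})\big)$, where $\{t<0\}\in \mathcal{O}_1$ by axiom (v), the cylinder $\RR^p \times \{t<0\}$ is definable by axiom (ii), the graph $\{P(x)-t=0\}$ is algebraic by (iv), the intersection is supplied by (i), and the projection is definable by (iii).

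For the inductive step, suppose $\phi$ and $\psi$ define sets $A$ and $B$. After applying axiom (ii) repeatedly to cylindrify both sets into a common ambient space $\RR^p$ indexed by the union of their free variables, the connectives $\phi\wedge\psi$, $\phi\vee\psi$, $\neg\phi$ define $A\cap B$, $A\cup B$, $\RR^p\setminus A$, all definable by axiom (i). If $\phi$ defines $A \subset \RR^{p+1}$ and $x_{p+1}$ is the quantified variable, then $\exists x_{p+1}\,\phi$ defines the image $\pi(A)\subset\RR^p$ under the canonical projection, definable by axiom (iii); the universal case follows from $\forall x_{p+1}\,\phi \equiv \neg\,\exists x_{p+1}\,\neg\phi$ together with axiom (i).

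The content of the statement is concentrated in axiom (iii), the Tarski--Seidenberg principle in the semialgebraic case; the remaining steps are formal. The part requiring the most care is the bookkeeping of free variables and the reduction of compound terms and nested definable functions to graph conditions by introducing fresh existentially quantified variables: one must check that this rewriting terminates and yields an equivalent formula whose atoms are of the treated form, and that cylindrification aligns the ambient spaces correctly before each Boolean operation. Once this normalization is in place, the induction closes immediately.
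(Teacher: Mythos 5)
Your structural induction is correct and is precisely the standard argument: atomic formulas are handled by axioms (iv)--(v) together with the projection trick for strict inequalities and graph conditions for definable function symbols, Boolean connectives by axiom (i), and quantifiers by axiom (iii) (with $\forall$ reduced to $\neg\exists\neg$), the remaining coordinate-permutation bookkeeping being settled by intersecting with algebraic diagonals and projecting. The paper itself gives no proof of this proposition --- it is stated as a reformulation of the Tarski--Seidenberg and quantifier-elimination material in the cited reference \cite{Cos99} --- and the proof carried out there is the same induction you describe, so your proposal matches the intended argument.
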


\subsection{Characterization of definable conservative gradients}
Our main convergence result holds under definability assumptions, we start by showing that conservativity admits a simpler characterization in this context.
From now on we fix an o-minimal structure (for example semialgebraic sets, see Section \ref{sec:presentation} for more details on definability), all definable objects we shall consider are implicitly definable in this structure. Recall that an o-minimal structure is a sequence of families of subsets $(\mathcal{O}_i)_{i \in \NN}$ such that for each $i \in \NN$, $\mathcal{O}_i$ contains subsets of $\RR^i$ and satisfy axiomatic compatibility conditions. 

\begin{definition}[Definably conservative gradient]
				Let $D \colon \RR^p \rightrightarrows \RR^{p}$ be a set valued field with closed graph, non empty and locally bounded values. 
				Assuming in addition that $D$ is definable, $D$ is called definably conservative if equation \eqref{eq:conservative} holds only for definable $C^1$ loops $\gamma$ and definable selections $v$. 
		\label{def:definablyConservative}
\end{definition}

Following Definition \ref{def:definablyConservative}, it is obvious that a definable conservative gradient is definably conservative since definable $C^1$ loops are absolutely continuous and definable selections are measurable. The following result provides a converse, a slightly more general proof is found in Section \ref{sec:definable}. 

\begin{theorem}
				Let $D \colon \RR^p \rightrightarrows \RR^{p}$ be definably conservative, then $D$ is conservative.
				\label{th:definablyConservative}
\end{theorem}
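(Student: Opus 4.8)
The plan is to upgrade the vanishing of circulation from definable $C^1$ loops to arbitrary absolutely continuous loops, by first manufacturing a potential out of the definable data and then transferring the integral identity \eqref{eq:conservative} to general curves through a stratification of $\mathrm{graph}\,D$. By the characterization of conservativity underlying Definition \ref{def:conservativeMapForF} and \cite{bolte2020conservative}, a closed-graph, locally bounded, nonempty-valued $D$ is conservative for some locally Lipschitz $f$ if and only if the circulation $\int_0^1 \langle \dot\gamma(t), v(t)\rangle\,\dd t$ vanishes along every absolutely continuous loop $\gamma$ and every measurable selection $v$ of $D\circ\gamma$; definable conservativity is exactly the same statement restricted to definable $C^1$ loops and definable selections. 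Thus the theorem asks to remove the definability restriction on $\gamma$ and $v$.

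First I would construct the potential. For $x \in \RR^p$, pick a definable $C^1$ arc $\sigma$ from $0$ to $x$ (e.g.\ a reparametrized segment) and, using the o-minimal definable selection theorem, a definable selection $v(t) \in D(\sigma(t))$; set $f(x) := \int_0^1 \langle \dot\sigma(t), v(t)\rangle\,\dd t$. The value is independent of these choices: concatenating two such arcs with one reversed, and smoothing the junctions to restore the $C^1$ property, produces a definable $C^1$ loop carrying a definable selection, so the definable conservativity hypothesis forces the two integrals to agree. The same argument gives the additive identity $f(b)-f(a) = \int \langle \dot\sigma, v\rangle\,\dd t$ along any definable arc from $a$ to $b$, and local boundedness of $D$ yields $|f(a)-f(b)| \le M\|a-b\|$ locally, so $f$ is locally Lipschitz.

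Next comes the transfer to arbitrary curves, which is the heart of the argument. I would take a definable $C^1$ stratification $\SSS$ of $\RR^p$ compatible with $\mathrm{graph}\,D$ and fine enough that on each stratum $S$ the projected field is continuous. The key is a \emph{projection formula}: for $x \in S$, the set $\proj_{T_x S} D(x)$ is the singleton $\{\nabla (f|_S)(x)\}$. This follows from the previous step applied to arbitrary definable $C^1$ arcs $\sigma$ lying inside $S$, since the identity $\tfrac{\dd}{\dd t} f(\sigma(t)) = \langle \dot\sigma(t), v(t)\rangle$ holds for almost every $t$ and every definable selection; because $\dot\sigma(t)$ sweeps out all of $T_{\sigma(t)}S$, this simultaneously shows $f|_S$ is $C^1$ and pins the tangential component of every element of $D(x)$ to the single value $\nabla(f|_S)(x)$, independently of the selection.

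Finally, let $\gamma$ be an arbitrary absolutely continuous curve and $v$ a measurable selection of $D\circ\gamma$; then $t \mapsto f(\gamma(t))$ is absolutely continuous because $f$ is locally Lipschitz. I would invoke the standard o-minimal tangency lemma: for almost every $t$, the velocity $\dot\gamma(t)$ is tangent to the stratum $S$ containing $\gamma(t)$, whence $\tfrac{\dd}{\dd t} f(\gamma(t)) = \langle \nabla(f|_S)(\gamma(t)), \dot\gamma(t)\rangle$, and since $\dot\gamma(t) \in T_{\gamma(t)}S$ the projection formula turns this into $\langle v(t), \dot\gamma(t)\rangle$; integrating recovers \eqref{eq:conservative}. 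The main obstacle is precisely this last transfer: the hypothesis only controls definable arcs, whereas a general absolutely continuous curve may spend positive parameter time inside lower-dimensional strata where $f$ fails to be differentiable and $D$ is genuinely multivalued. The stratified projection formula together with the tangency lemma is what neutralizes this, guaranteeing that along $\gamma$ only the selection-independent tangential part of $D$ is ever seen; establishing this regularity on strata and justifying the tangency lemma for merely absolutely continuous curves is where the real work lies, and is the content of the more general argument of Section \ref{sec:definable}.
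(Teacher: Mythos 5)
Your overall architecture mirrors the paper's: build a potential $f$ by integrating definable selections along definable arcs, establish a stratified projection formula (the paper's ``variational stratification,'' Definition \ref{def:projectionFormula}), then pass to arbitrary absolutely continuous curves by the Whitney tangency argument (the paper delegates this last step to \cite[Theorem 3]{bolte2020conservative} through Theorem \ref{th:characterizationConservativity}). The potential construction and the final transfer step are sound, granted the middle step.

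The genuine gap is that middle step, which you assert in one sentence but which is the actual content of the paper's proof. You claim that on each stratum $S$ of a ``fine enough'' stratification, $\proj_{T_x S}D(x)=\{\nabla (f|_S)(x)\}$ at \emph{every} $x\in S$, because the chain rule holds along definable arcs in $S$ and $\dot\sigma(t)$ sweeps out $T_{\sigma(t)}S$. This does not go through as stated. First, the chain rule along any fixed definable arc holds only for almost every $t$, so it yields no constraint on $D(x)$ at a prescribed point $x$; moreover, one cannot force a definable selection to pass continuously through a prescribed element $u\in D(x)$, so graph-closedness only controls limits of selection values along arcs, never the whole fiber $D(x)$. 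Second, single-valuedness of the tangential projection can genuinely fail on lower-dimensional subsets of a stratum, so no single stratification chosen ``fine enough for continuity'' settles the issue; what can actually be proved is a density statement followed by a refinement-by-dimension procedure. This is precisely the paper's technical core: Lemma \ref{lem:setVlauedDefinable} shows the set where the field is multivalued has empty interior, by taking two definable selections $V_1\neq V_2$ on an open set, a unit definable field $v$ in $\mathrm{Im}(V_1-V_2)^T$, the (non-definable) flow of $v$, approximating that flow by definable $C^1$ arcs via Weierstrass, and passing to the limit to contradict $\|v\|=1$; Theorem \ref{th:setValuedDefinable} then runs an induction on dimension (definable cells and diffeomorphisms onto $\RR^{\dim M}$) that pushes the bad set into strata of lower dimension until the projection formula holds on a genuine Whitney stratification. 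Since you explicitly defer ``the real work'' on this point to Section \ref{sec:definable}, i.e., to the paper's own argument, the proposal as written does not prove the theorem.
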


\begin{remark}
				It is of primary importance in Definition \ref{def:definablyConservative}, that the definable loops and definable set valued mapping $D$, are definable in the same o-minimal structure. For example consider the set $E$, the graph of the exponential function in $\RR^2$. By a theorem of Wilkie \cite{wilkie1999theorem}, there exists an o-minimal structure which contains all semi-algebraic sets and such that $E$ is definable in this structure, call it Wilkie's structure. Consider $D \colon \RR^2 \rightrightarrows \RR^2$ to be $\{0\}$ outside of $E$ and the unit Euclidean ball on $E$, it has a closed graph, it is bounded with nonempty values and definable in Wilkie's structure. Consider any differentiable semialgebraic loop $\gamma \colon [0,1] \to \RR^2$. Since $E$ is the graph of an analytic function, but is not a semialgebraic set, the intersection of $\gamma$ and $E$ must contain only finitely many points. Hence $\gamma$ and $D$ satisfy formula \eqref{eq:conservative}, but $D$ is obviously not conservative as it does not satisfy the integral formula along any nontrivial absolutely continuous path which image is in $E$. Hence the importance of having a unique fixed o-minimal structure throughout the manuscript. 
				\label{rem:sameStructure}
\end{remark}

\begin{remark}
				In the final stages of completing the first version of this manuscript, we became aware of the concurrent and independent work \cite{davis2021conservative}. Theorems 3.2 and 3.5 in \cite{davis2021conservative} proves equivalence between conservativity and definable conservativity provided that both $D$ and its associated potential $f$ are definable. The proposed result is very similar, it constitutes a minor improvement as it removes the requirement for definability of $f$ and only requires definability of $D$. The equivalence is a biproduct of the analysis of \cite{davis2021conservative} which contribution is orthogonal and in particular does not relates to parametric optimality and min-max problems.
				\label{rem:davis}
\end{remark}

This result shows that definable conservativity is equivalent to conservativity in the definable world. Its proof is based on an equivalent characterization of conservativity in this context, variational stratification \cite{bolte2007clarke,bolte2020conservative}.
The restriction to definable loops and selection in Definition \ref{def:definablyConservative} opens the possibility to use all results of o-minimal geometry \cite{dries1996geometric,Cos99} in order to prove that a given field is conservative. This can be in particular useful to prove conservativity of the PO formula.

\subsection{Definable parametric optimality formula}
Taking advantage of the strong rigidity of definable objects, we obtain the following result, which proof is stated in Section \ref{sec:application}.
\begin{theorem}
				Let $F \colon \RR^{p}\times \RR^{r} \to \RR$ be locally Lipschitz and definable. Set
				\begin{align*}
								f \colon \RR^p &\to \RR\\
								x &\mapsto \max_{y \in \RR^r} F(x,y),
				\end{align*}
				where the argmax is assumed to have nonempty and locally bounded values, call it $P(x) \subset \RR^r$. Set 
				\begin{align*}
								D_f \colon \RR^p &\rightrightarrows \RR^p\\
								x &\mapsto \conv\left\{ u,\  \exists y \in P(x),\,\, (u,0) \in \partialc F(x,y) \right\},
				\end{align*}
				then $D_f$ is conservative for $f$.
				\label{th:conservativePartialMinimization}
\end{theorem}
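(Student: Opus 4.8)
The plan is to prove the projection (variational stratification) formula for $D_f$ and to deduce conservativity from it. Concretely, I would first record that $D_f$ has the structural properties required of a conservative field: its values are nonempty (as $P(x)\neq\emptyset$ and $\partialc F(x,y)\neq\emptyset$), locally bounded (as $F$ is locally Lipschitz, so $\partialc F$ is locally bounded while $P$ has locally bounded values), and its graph is closed (outer semicontinuity of $\partialc F$ together with the closedness of $\mathrm{graph}\,P$, which follows from continuity of $F$ and $f$ and local boundedness of $P$). Definability of $D_f$ is immediate from quantifier elimination (Proposition \ref{prop:quantifierElimination}): $P$, the set $\{u:\exists y\in P(x),\,(u,0)\in\partialc F(x,y)\}$, and its convex hull are all definable, and $\partialc F$ is definable since $F$ is. By the variational stratification characterization of conservativity underlying Theorem \ref{th:definablyConservative} (see \cite{bolte2007clarke,bolte2020conservative}), it then suffices to produce a $C^1$ definable stratification $(M_i)$ of $\RR^p$ such that $f$ is $C^1$ on each $M_i$ and, for every stratum $M$, every $x\in M$ and every $v\in D_f(x)$, one has $\proj_{T_xM}(v)=\nabla(f|_M)(x)$.

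The heart of the argument is this projection formula, and the idea is to transfer the corresponding formula for $\partialc F$ through the canonical projection $\pi\colon\RR^{p+r}\to\RR^p$. Writing $G:=\mathrm{graph}\,P\subset\RR^{p+r}$, I would invoke o-minimal stratification of the definable map $\pi|_G$ (\cite{dries1996geometric,Cos99}) to choose simultaneously a $C^1$ stratification $(S_k)$ of $\RR^{p+r}$ adapted to $F$ and to $G$, and a $C^1$ stratification $(M_i)$ of $\RR^p$, in such a way that: $F$ is $C^1$ on each $S_k$ and $\partialc F$ satisfies the projection formula there; $f$ is $C^1$ on each $M_i$; $G$ is a union of strata $S_k$; and $\pi$ restricted to each stratum $S\subset G$ is a $C^1$ submersion onto a single stratum $M=\pi(S)$ of $(M_i)$. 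The existence of such a common refinement is exactly the rigidity that o-minimality provides, and it is here that definability is indispensable.

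Now fix a stratum $M$, a point $x\in M$, a maximizer $y\in P(x)$ and $(u,0)\in\partialc F(x,y)$; let $S\subset G$ be the stratum containing $(x,y)$, so that $\pi(S)=M$. The crucial observation is that $F|_S=f\circ\pi|_S$, because $F(x',y')=f(x')$ whenever $y'\in P(x')$; differentiating along $S$ gives $\langle\nabla(F|_S)(x,y),w\rangle=\langle\nabla(f|_M)(x),d\pi|_S(w)\rangle$ for every $w\in T_{(x,y)}S$. Given any $d\in T_xM$, surjectivity of $d\pi|_S$ yields $w=(d,e)\in T_{(x,y)}S$ for some $e\in\RR^r$, and combining the projection formula for $\partialc F$ at $(x,y)$ with the identity above gives
\[
\langle\nabla(f|_M)(x),d\rangle=\langle\nabla(F|_S)(x,y),w\rangle=\langle\proj_{T_{(x,y)}S}(u,0),w\rangle=\langle(u,0),w\rangle=\langle u,d\rangle .
\]
Here the vanishing second block of $(u,0)$ is decisive: it makes $\langle(u,0),(d,e)\rangle=\langle u,d\rangle$ independent of the fibre component $e$, so the ambiguity in lifting $d$ through the submersion is irrelevant. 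Since $d\in T_xM$ was arbitrary, $\proj_{T_xM}(u)=\nabla(f|_M)(x)$; and since every $v\in D_f(x)$ is, by Carath\'eodory's theorem, a convex combination of such vectors $u$ while projection is linear, $\proj_{T_xM}(v)=\nabla(f|_M)(x)$ as well. This is the sought projection formula, whence $D_f$ is conservative for $f$.

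The main obstacle I anticipate is the simultaneous construction of the compatible stratifications turning $\pi|_G$ into a stratified submersion while retaining both the projection formula for $\partialc F$ and the $C^1$ regularity of $F$ and $f$; everything downstream is linear algebra made possible by the zero second block. This is precisely the step that collapses outside the definable world, since without it the lift of $d$ cannot be controlled, which is consistent with the announced counterexample for general path-differentiable $F$.
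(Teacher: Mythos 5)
Your proposal is correct in substance, but it follows a genuinely different route from the paper's. The paper derives the theorem from a more general result (Theorem \ref{th:definableSelection}, instantiated in Example \ref{ex:partialMax}): it reduces conservativity to a chain rule along definable $C^1$ curves via Theorem \ref{th:definablyConservative}, then, along a definable curve $t\mapsto x(t)$, takes definable selections $y(t)\in P(x(t))$ (piecewise $C^1$ by definability, hence absolutely continuous on pieces), applies conservativity of $\partialc F$ to the lifted curve $t\mapsto (x(t),y(t))$, and kills the $\left\langle \dot{y}(t),v\right\rangle$ term by choosing $v=0$; countable dense definable selections (Lemmas \ref{lem:definableSelectors} and \ref{lem:definableSelectors2}) then upgrade this to a chain rule for all of $D_f$. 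You instead construct a variational stratification for $(f,D_f)$ directly, by stratifying $\pi$ restricted to $\mathrm{graph}\,P$ into a stratified submersion and pushing the projection formula for $\partialc F$ through the identity $F|_S=f\circ \pi|_S$; your observation that the zero second block makes the choice of lift irrelevant is the exact stratified counterpart of the paper's vanishing $\left\langle \dot{y},v\right\rangle$ term, and you conclude via the reverse implication of Theorem \ref{th:characterizationConservativity}, which needs no definability. What each approach buys: yours is more geometric, avoids the selection lemmas, and can invoke the classical projection formula for Clarke subgradients of definable functions \cite{bolte2007clarke} off the shelf; but it requires the heavier o-minimal tool of stratification of definable \emph{maps} compatibly with prescribed definable sets (\cite[4.8]{dries1996geometric}), which the paper never needs, whereas the paper's curve-based argument generalizes immediately to arbitrary definable conservative jacobians $D$ in place of $\partialc F$ and to general graph-closed selection mappings $P$ beyond argmax mappings.

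Two caveats on your writeup. First, your justification of nonemptiness of $D_f(x)$ is too quick: nonemptiness of $\partialc F(x,y)$ does not by itself produce an element with vanishing second block; one needs the optimality condition at maximizers, namely \cite[Example 10.12]{rockafellar1998variational}, exactly as the paper invokes it in Example \ref{ex:partialMax}. Second, in the simultaneous stratification the order of refinements matters: the projection formula for $\partialc F$ and the $C^1$-smoothness of $f$ are stable under refinement of strata, but the submersion-onto-a-stratum property is not, so the map $\pi|_{\mathrm{graph}\,P}$ must be stratified last, compatibly with the previously fixed stratifications of source and target. This is standard o-minimal technology, but it is precisely where the work of your proof lies, and it deserves to be carried out rather than asserted.
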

This result is the consequence of the more general Theorem \ref{th:definableSelection} stated in Example \ref{ex:partialMax}. The result is in fact stated for more general conservative gradients than the Clarke subdifferential, which opens the possibility to use objects defined through other calculus rules in place of subgradients, for example, outputs of algorithmic differentiation \cite{bolte2020conservative,bolte2020mathematical}.
This result implies that the PO formula can be used as a first order optimization oracle in the definable world, as illustrated in the next section. 

\subsection{Convergence of the ridge method}
Getting back to the initial problem
\begin{align*}
				\min_{x \in \RR^p} \max_{y \in \RR^r} F(x,y)
\end{align*}
where $F \colon \RR^p \times \RR^r \to \RR$ is locally Lipschitz and, in addition, definable. Assume that the mapping $x \rightrightarrows P(x) = \arg\max_{y \in \RR^r} F(x,y)$ is nonempty and locally bounded.
Consider the ridge algorithm, set $x_0 \in \RR^p$ and iterate for $k \in \NN$
\begin{align*}
				y_k &\in \arg\max_{y \in \RR^r} F(x_k,y)\\
				(u_k, 0) &\in \partialc F(x_k, y_k)\\
				x_{k+1} &= x_k - \alpha_k u_k
\end{align*}
The following results ensures under mild assumptions that accumulation points $(\bar{x}, \bar{y})$ of $(x_k, y_k)_{k\in \NN}$ are PO critical points for $f$ such that
\begin{align}
				\label{eq:POcritical}
				0 &\in \mathrm{conv} \{u,\, (u,0) \in \partialc F(\bar{x},y), \, y \in \arg\max_{z \in \RR^r} F(\bar{x},z)\} \\
				\bar{y} &\in \arg\max_{y \in \RR^r} F(\bar{x},y)\nonumber
\end{align}

\begin{theorem}
				Let $(\alpha_k)_{k\in\NN}$ be a non summable squence of positive step sizes tending to zero and assume that $(x_k)_{k \in \NN}$ is bounded. Then $F(x_k, y_k)$ converges, and all accumulation points $(\bar{x}, \bar{y})$ of $(x_k, y_k)_{k\in \NN}$ satisfy \eqref{eq:POcritical}.
				\label{th:convergence}
\end{theorem}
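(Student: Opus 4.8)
The plan is to recognize that the ridge iteration is exactly a subgradient-type discretization of the differential inclusion $\dot{x} \in -D_f(x)$, where $D_f$ is the conservative gradient furnished by Theorem \ref{th:conservativePartialMinimization}, and then to feed this into the ODE method for differential inclusions. First I would check that the search direction lives in the conservative field: since $y_k \in P(x_k)$ and $(u_k,0) \in \partialc F(x_k,y_k)$, the vector $u_k$ is one of the generators of $D_f(x_k) = \conv\{u,\ \exists y \in P(x),\ (u,0)\in\partialc F(x,y)\}$, so $u_k \in D_f(x_k)$. The update $x_{k+1} = x_k - \alpha_k u_k$ is therefore the explicit Euler discretization of $\dot{x}\in -D_f(x)$ with nonsummable step sizes tending to zero, and by Theorem \ref{th:conservativePartialMinimization} the field $D_f$ is conservative for $f$, with closed graph and nonempty, locally bounded values.

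Next I would exploit the Lyapunov structure. Boundedness of $(x_k)$ confines the iterates to a compact set $K$, on which local boundedness of $D_f$ yields a uniform bound $\|u_k\| \le M$; local boundedness of $P$ then forces $(y_k)$ to remain bounded as well, so accumulation points $(\bar{x},\bar{y})$ exist. Conservativity makes $f$ a strict Lyapunov function for the inclusion: along any absolutely continuous solution, the chain rule for conservative gradients gives $\frac{d}{dt}f(x(t)) = \langle \dot{x}(t), w\rangle$ for every $w \in D_f(x(t))$ (the value being selection-independent), whence, taking $\dot{x}(t) = -v(t)$ with $v(t)\in D_f(x(t))$, one obtains $\frac{d}{dt}f(x(t)) = -\|\dot{x}(t)\|^2 \le 0$, with strict decrease off the equilibrium set $S = \{x:\, 0 \in D_f(x)\}$. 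I would then invoke the standard stochastic-approximation machinery (Benaïm--Hofbauer--Sorin; see \cite{davis2018stochastic,bolte2020conservative,bolte2020mathematical}): the piecewise-affine interpolation of $(x_k)$ is an asymptotic pseudotrajectory of $\dot{x}\in -D_f(x)$, so its limit set is internally chain transitive, contained in $S$, and $f$ is constant on each of its connected components.

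The final step upgrades ``$f$ constant on the limit set'' to genuine convergence of the values, and here definability re-enters decisively. By the nonsmooth Sard theorem for definable conservative gradients, the set of critical values $f(S)$ is finite; since $f(x_k)=F(x_k,y_k)$ accumulates only in $f(S)$ and its oscillation vanishes along an asymptotic pseudotrajectory of a strict Lyapunov descent, $F(x_k,y_k)$ must converge. For an accumulation point $(\bar{x},\bar{y})$, the inclusion $\bar{x}\in S$ gives $0 \in D_f(\bar{x})$, which is precisely the first line of \eqref{eq:POcritical}; and since $F$ is continuous with $P$ locally bounded, passing to the limit in $F(x_k,y_k) \ge F(x_k,z)$ yields $\bar{y}\in P(\bar{x})$, i.e. graph-closedness of $P$, which is the second line.

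The main obstacle is exactly this last step. The reduction to a conservative descent and the APT argument are essentially routine once Theorem \ref{th:conservativePartialMinimization} is available, but convergence of the \emph{value} sequence — rather than mere accumulation in the critical set — hinges on the finiteness of $f(S)$, which is a definable phenomenon. Without it the values could oscillate over a continuum of critical levels, and indeed the path-differentiable counterexample announced in the introduction shows that conservativity of the PO formula, and hence the whole argument, genuinely fails outside the definable category.
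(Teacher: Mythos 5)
Your proposal is correct and takes essentially the same route as the paper's proof: both identify $u_k \in D_f(x_k)$ with $D_f$ the conservative field of Theorem \ref{th:conservativePartialMinimization}, view the iteration as a perturbed solution of $\dot{x} \in -D_f(x)$, apply the Bena\"im--Hofbauer--Sorin machinery \cite{benaim2005stochastic} to get an internally chain transitive limit set, and combine the Lyapunov property of $f$ (coming from conservativity) with the definable Morse--Sard condition (finiteness of $\{f(x) : 0 \in D_f(x)\}$) to conclude that the limit set lies in the PO critical set with $f$ constant on it. Your only additions are details the paper leaves implicit, such as deducing $\bar{y} \in P(\bar{x})$ from graph-closedness of $P$.
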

\begin{proof}
			This is a consequence of the results of \cite{benaim2005stochastic}, we sketch the proof arguments.

			The sequence defined by the \eqref{eq:ridgeAlgorithm} algorithm satisfies for all $k$, $u_k \in D_f(x_k)$ where $D_f$ is a convex valued conservative gradient for $f$ as described in Theorem \ref{th:conservativePartialMinimization}. Furthermore, condition \eqref{eq:POcritical} can be equivalently read as $0 \in D_f(x)$, see also \eqref{eq:optiCondition}. 
			
			Convex valued conservative gradients can be used in place of subgradients in first order method, while allowing to deploy the general method of \cite{benaim2005stochastic} for convergence analysis, we describe the main elements using the concepts and vocabulary in \cite{benaim2005stochastic}. 
			\begin{itemize}
							\item \cite[Proposition 1.3]{benaim2005stochastic} states that the recursion \eqref{eq:ridgeAlgorithm} produces a perturbed solution to the differential inclusion $\dot{x} \in -D_f(x)$. Indeed, we can apply definition III with a deterministic null perturbation term so that we meet step size requirements.
							\item \cite[Theorem 4.2]{benaim2005stochastic} ensures that perturbed solutions satisfy \cite[Theorem 4.1 (ii)]{benaim2005stochastic}: the curve $c: \RR^+ \to \RR^p$, given by piecewise affine interpolation of iterate sequence (\cite[Definition IV]{benaim2005stochastic}), is absolutely continuous, and for any $T > 0$, as $s \to \infty$, all accumulation points of $c_s \colon [0,T] \to \RR^p$, such that $c_s(t) = c(s+t)$ for all $s> 0$ and $t \in  [0,T]$, are solutions to the differential inclusion $\dot{x} \in -D_f(x)$.
							\item \cite[Theorem 4.3]{benaim2005stochastic} proves that limit sets of such curves are internally chain transitive with respect to the differential inclusion $\dot{x} \in -D_f(x)$. This is a precise notion of invariance given in \cite[Definition VI]{benaim2005stochastic}.
							\item Since $D_f$ conservative for $f$, $f$ is Lyapunov for the differential inclusion, and since both are definable, we have the Morse-Sard condition: $\left\{f(x), 0 \in D_f(x) \right\}$ is finite. Using Proposition 3.27 in \cite{benaim2005stochastic} we obtain that limit sets of the recurence is contained in PO critical solutions \eqref{eq:POcritical} and that $f$ is constant on this limit set, which is our result.
			\end{itemize}
\end{proof}

\begin{remark}
				The same result holds mutatis mutandis with a definable conservative gradient $D$ in place of $\partialc F$, for example one obtained by algorithmic differentiation \cite{bolte2020conservative}. Similarly minimization and maximization could be interchanged arbitrarily, modulo changes in the step sign.
				\label{rem:definableConservativeField}
\end{remark}

\begin{remark}
				The convex hull in \eqref{eq:POcritical} is necessary, for example we set $F(x,y) = xy - 2\vert\vert y\vert-1\vert$ for $x \in [-1,1]$, and extend by continuity, $F(x,y) = y - 2\vert\vert y\vert-1\vert$ for $x \geq 1$ and $F(x,y) = -y - 2\vert\vert y\vert-1\vert$ for $x\leq -1$. We have $f(x) = \max_{y} F(x,y) = \vert x\vert$ for all $x \in [-1,1]$ and $\max_{y} F(x,y) =1$ otherwise. In this case, the algorithm reduces to subgradient descent and the convex hull is necessary to obtain a valid optimality condition at $0$.
				\label{rem:convexHull}
\end{remark}

\subsection{Failure of parametric optimality formula in general}
It was already shown that the PO formula does not necessarily provide elements of the subdifferential (see footnote\footnotemark[\value{footnote}] on page \pageref{ex:failure1}). Yet the failure only occurred at the origin which does not prevent the PO formula to provide a conservative gradient for the value function $f$. The following result shows that this is not the case in general, its proof is given in Section \ref{sec:failure}.
\begin{theorem}
				There exists a Lipschitz path differentiable function $g \colon \RR^2 \to \RR$ such that
				\begin{itemize}
								\item for all $x \in [0,1]$, $\max_{y \in \RR} g(x,y) = x$ and the maximum is attained on $[0,1]$.
								\item for all $x \in [0,1]$, $0 \in \left\{ v,\, (v,0) \in \partialc g(x,y),\, y \in \arg\max_{y \in \RR}  g(x, y) \right\}$
								\item for countably many $x \in [0,1]$, $\arg\max_{y \in \RR} g(x,y)$ is a pair, for the rest it is a singleton.
				\end{itemize}
				\label{th:counterExample}
\end{theorem}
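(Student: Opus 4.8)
The plan is to construct $g$ explicitly as a countable superposition of elementary ``tent plus oscillation'' pieces arranged along a \emph{singular} ridge, to read off the three bullets from the geometry, and to devote the real effort to path differentiability. The guiding principle is that Theorem \ref{th:conservativePartialMinimization} is unavailable \emph{by design}: the example must be non-definable, since in the definable world the $\mathrm{(PO)}$ formula is forced to be conservative. First I would fix the location of the maximizers. Let $c\colon[0,1]\to[0,1]$ be the Cantor function and let $(a_n,b_n)_{n\ge1}$ be the complementary intervals on which $c\equiv v_n$. I would arrange that, for each $x$, the (generically unique) maximizer sits at $y^{*}(x)=c(x)$ with peak value $g(x,c(x))=x$, so that $f(x)=\max_y g(x,y)=x$ with the maximum attained in $[0,1]$ (first bullet). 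The decisive feature is that $y^{*}=c$ is continuous but \emph{not} absolutely continuous: the value climbs at unit rate while the ridge is locally constant off a set of measure zero. This is exactly what reconciles path differentiability of $g$ with failure of the $\mathrm{(PO)}$ formula, because the ``bad'' path $t\mapsto(t,y^{*}(t))$ is not absolutely continuous and is therefore invisible to the integral condition \eqref{eq:conservative} of Definition \ref{def:conservativeMapForF}.

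Second, I would engineer the local profile so that $(0,0)\in\partialc g(x,y^{*}(x))$ for every $x$ (second bullet). The bare tent $g_0(x,y)=x-L\,|y-c(x)|$ already gives the first bullet, but at the kink $y=c(x)$ its limiting gradients are $(1,\pm L)$ (using $c'=0$ almost everywhere), so $\partialc g_0(x,c(x))=\{1\}\times[-L,L]$ and the $\mathrm{(PO)}$ set is the singleton $\{1\}$ --- conservative, no pathology. To push $0$ into the $\mathrm{(PO)}$ set I would superpose, near the ridge, a high-frequency term $\omega$ that \emph{vanishes on the ridge} (so that neither the value nor the location of the maximum is altered) but whose limiting gradients at the ridge supply at least one further direction with negative first component. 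Tuning frequencies and amplitudes so that the limiting gradients of $g=g_0+\omega$ at $(x,c(x))$ surround the origin yields $(0,0)\in\conv\{\text{limiting gradients}\}=\partialc g(x,c(x))$. It is essential that $(0,0)$ enter as a convex combination of several limiting gradients, not as a genuine gradient: a genuinely flat direction cannot exist next to a ridge whose value increases at unit rate, which is precisely why a single clean tent (being definable) can never exhibit the defect.

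Third, I would organise the switching so that two maximizers coexist exactly on the countable set of endpoints $\{a_n,b_n\}$ (third bullet): at these points the active tent changes and a short transition makes the pre- and post-switch locations tie for the maximum, whereas for every other $x$ the maximizer is unique. Lipschitzness follows by bounding $L$ and the amplitudes of $\omega$, letting the amplitude decay fast enough both as one approaches the ridge and as $n\to\infty$; checking the first and third bullets is then essentially bookkeeping on this nested structure.

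The main obstacle is path differentiability of $g$, i.e.\ conservativity of $\partialc g$ on $\RR^2$ in the sense of Definition \ref{def:conservativeMapForF}. I would verify the integral formula \eqref{eq:conservative} along arbitrary absolutely continuous loops directly, through an averaging principle: along any such loop the high-frequency oscillations of $\omega$ integrate to their mean and cancel, while the singular ridge $\{(x,c(x))\}$ --- the graph of a function locally constant off a null set --- contributes nothing to a two-dimensional absolutely continuous integral. Concretely, I would approximate $g$ by its partial superpositions $g_m$, which are piecewise affine, hence definable and path differentiable with conservative $\partialc g_m$, control the graph-limit of $\partialc g_m$, and pass to the limit in \eqref{eq:conservative} by dominated convergence. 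The delicate point is that conservativity is \emph{not} automatically stable under such limits; the decay conditions on amplitudes and frequencies must be tuned precisely so that stability holds \emph{while} $(0,0)$ survives in every $\partialc g(x,c(x))$. Reconciling these two requirements is the technical heart of the theorem, and it is exactly the tension that cannot be resolved in the definable category, in accordance with Theorem \ref{th:conservativePartialMinimization}.
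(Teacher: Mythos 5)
Your construction contains a structural contradiction that no tuning of amplitudes and frequencies can repair: with the ridge placed on the graph of the Cantor function, the second bullet and path differentiability are mutually exclusive. The Cantor function is locally constant off the Cantor set, so over each complementary interval $(a_n,b_n)$ your ridge is the horizontal segment $\{(x,v_n):\, x \in (a_n,b_n)\}$, which is a perfectly smooth absolutely continuous path $\gamma(t)=(t,v_n)$ along which $g(\gamma(t))=t$. If $g$ is path differentiable, the chain-rule characterization of conservativity (Lemma \ref{lem:setValuedMapping} with $m=1$, applied to $\partialc g$) forces, for almost every $t\in(a_n,b_n)$,
\begin{align*}
1=\frac{d}{dt}\,g(\gamma(t))=\left\langle v,(1,0)\right\rangle = v_1 \qquad \text{for all } v=(v_1,v_2)\in\partialc g(t,v_n),
\end{align*}
so every Clarke subgradient at such a ridge point has first component $1$, and the PO set there is contained in $\{1\}$; in particular $(0,0)\notin\partialc g(x,c(x))$ for almost every $x$ in every complementary interval, hence for almost every $x\in[0,1]$. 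Your guiding principle --- that the bad path $t\mapsto(t,c(t))$ is not absolutely continuous and therefore ``invisible'' to Definition \ref{def:conservativeMapForF} --- protects only the measure-zero part of the ridge lying over the Cantor set; the full-measure part consists of horizontal segments and is entirely visible. This is precisely why the paper (Section \ref{sec:failure}) does not use the graph of a singular function but a Whitney-type fractal $C$ (a product of Cantor sets in rotated coordinates): $C$ contains no horizontal segments, any function whose graph lies in $C$ has infinite total variation, and the projections of $C$ onto the rotated axes have measure zero, which guarantees (Lemma \ref{lem:projZeroMeasure}, applied in rotated coordinates) that absolutely continuous curves meet $C$ with nonzero velocity only on a null set of times. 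The paper then takes $g(x,y)=2f(x,y)+x$ with $f=-\dist(\cdot,C)$, obtaining $\partialc f(z)=B$ (the full unit ball) at every $z \in C$, so that $0$ lies in the PO set for every $x\in[0,1]$, while Lipschitzness is automatic for a distance function.

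Two secondary problems would remain even if the ridge geometry were fixed. First, $g_0(x,y)=x-L\lvert y-c(x)\rvert$ is not Lipschitz: the Cantor function is only H\"older of exponent $\log 2/\log 3$, so $\lvert c(x)-c(x')\rvert$ is not $O(\lvert x-x'\rvert)$; this is another reason the paper works with a distance function rather than an explicit tent. Second, your verification of path differentiability is only a program: you acknowledge that conservativity does not pass to graph limits of the piecewise affine approximants $g_m$ and that the decay conditions must be ``tuned precisely,'' but no argument is given --- and, as shown above, with this choice of ridge the two requirements you are trying to balance cannot be reconciled at all.
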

The preceding result shows that the PO formula does not provide a conservative gradient as the value function is the identity on $\RR$, but the formula may result in the constant $0$, which is not compatible with the integration constraint in Definition \ref{def:conservativeMapForF}. It is also obvious that the ridge algorithm \eqref{eq:ridgeAlgorithm} applied to minimization of $f \colon x \mapsto \max_{y \in [0,1]} g(x,y)$ based on the PO formula may get stuck at any initialization point $x \in [0,1]$ since they are all steady states of the algorithm \eqref{eq:ridgeAlgorithm}. This illustrates the fact that failure of conservativity entails in this case failure for first order algorithms based on the PO formula.
Obviously the function $g$ given in Theorem \ref{th:counterExample} is not definable in any structure as otherwise Theorem \ref{th:conservativePartialMinimization} would apply.

\section{Definably conservative gradients and parametric optimality formula}
\label{sec:definable}
The result described in Theorem \ref{th:definablyConservative} is actually proved for conservative jacobians, which generalize conservative gradients in a similar way as jacobians generalize gradients. We start with an extension of Definition \ref{def:conservativeMapForF} to this setting and make the necessary technical connections with the work of \cite{bolte2020conservative}, in particular the chain rule along absolutely continuous curves and the variational stratification. These preliminaries, although not explicitly stated this way in \cite{bolte2020conservative} are direct consequences of \cite{bolte2020conservative} and given here for completeness. We then proceed to the proof of the main results of this section: definably conservative jacobians are conservative and definable PO formula describes a conservative gradient.

\subsection{Conservative jacobians}

The following defines conservativity for matrix set valued functions using vanishing circulation. This was suggested in \cite{bolte2020conservative}, it is equivalent to \cite[Definition 4]{bolte2020conservative}.
\begin{definition}[Conservative jacobians]
				Let $J \colon \RR^p \rightrightarrows \RR^{m \times p}$ be a set valued map with closed graph, non empty and locally bounded values. 
				Then $J$ is a conservative jacobian, if for all $\gamma \colon [0,1] \to \RR^p$, absolutely continuous with $\gamma(0) = \gamma(1)$, and all measurable functions, $V \colon [0,1] \to \RR^{m \times p}$, such that $V(t) \in J(\gamma(t))$ for all $t \in [0,1]$,
				\begin{eqnarray}
								\int_0^1 V(t) \dot{\gamma}(t) \dd t = 0.
					\label{eq:conservativeMappings}
				\end{eqnarray}
				If in addition, $J$ is definable, then $J$ is called definably conservative if \eqref{eq:conservativeMappings} holds only for definable $C^1$ loops $\gamma$ and definable selections $V$. 
		\label{def:conservativeMapping}
\end{definition}
The following Lemma is a useful alternative characterization of conservativity through an operational chain rule corresponding to \cite[Definition 4]{bolte2020conservative}.
\begin{lemma}
				Let $J \colon \RR^p \rightrightarrows \RR^{m \times p}$, be a graph closed locally bounded non empty valued map. Then the following are equivalent
				\begin{itemize}
								\item[(i)] There exists $G \colon \RR^p \to \RR^m$, locally Lipschitz such that for any $\gamma \colon [0,1] \to \RR^p$ absolutely continuous, for almost all $t \in [0,1]$
												\begin{align*}
																\frac{d}{dt}G(\gamma(t)) =  M \dot{\gamma}(t), \qquad \forall M \in J(\gamma(t)).
												\end{align*}
								\item[(ii)] $J$ is a conservative jacobian as stated in Definition \ref{def:conservativeMapping}.
				\end{itemize}
				\label{lem:setValuedMapping}
\end{lemma}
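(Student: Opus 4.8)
The plan is to prove the two implications separately. The implication $(i)\Rightarrow(ii)$ is a direct integration, whereas $(ii)\Rightarrow(i)$ requires constructing the potential $G$ by line integrals and then upgrading an almost-everywhere chain rule, which I expect to be the only delicate point.

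For $(i)\Rightarrow(ii)$, I would assume $G$ exists and fix an absolutely continuous loop $\gamma$ with $\gamma(0)=\gamma(1)$ together with a measurable selection $V(t)\in J(\gamma(t))$. Since $G$ is locally Lipschitz and $\gamma$ has compact image, the composition $t\mapsto G(\gamma(t))$ is absolutely continuous, hence equals the integral of its derivative. Applying the chain rule in $(i)$ with $M=V(t)$ gives $\frac{\dd}{\dd t}G(\gamma(t))=V(t)\dot\gamma(t)$ for almost all $t$, and integrating over $[0,1]$ with $\gamma(0)=\gamma(1)$ yields $\int_0^1 V(t)\dot\gamma(t)\,\dd t = G(\gamma(1))-G(\gamma(0))=0$, which is exactly \eqref{eq:conservativeMappings}.

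For $(ii)\Rightarrow(i)$, I would build $G$ as a matrix-valued line integral. First observe that for any absolutely continuous curve $\gamma$, the map $t\mapsto J(\gamma(t))$ is measurable with nonempty closed values (closed graph of $J$, continuity of $\gamma$) and essentially bounded (local boundedness of $J$ on the compact $\gamma([0,1])$), hence admits an essentially bounded measurable selection by a measurable selection theorem. Fixing the base point $0$, I define $G(x)=\int_0^1 V(t)\dot\gamma(t)\,\dd t\in\RR^m$ for any absolutely continuous path $\gamma$ from $0$ to $x$ and any measurable selection $V$ of $J\circ\gamma$. The vanishing circulation hypothesis $(ii)$ is precisely what makes this independent of the choice of $(\gamma,V)$: concatenating one admissible path with the time-reversal of another produces an absolutely continuous loop carrying a measurable selection of $J$, and \eqref{eq:conservativeMappings} forces the two integrals to agree. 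Local Lipschitzness of $G$ follows by joining nearby points $x,x'$ with a straight segment and bounding the associated integral by $M\|x-x'\|$, where $M$ bounds $J$ on a compact neighborhood.

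The main obstacle is the chain rule in the strong form required by $(i)$, namely that the identity must hold \emph{for all} $M\in J(\gamma(t))$. For a fixed $\gamma$ and a measurable selection $V$, path independence gives $G(\gamma(t))=G(\gamma(0))+\int_0^t V(s)\dot\gamma(s)\,\dd s$, so $\frac{\dd}{\dd t}G(\gamma(t))=V(t)\dot\gamma(t)$ almost everywhere, \emph{for every} such selection. To upgrade this ``for each selection, almost everywhere'' statement into ``for almost every $t$, for every $M\in J(\gamma(t))$'', I would argue by contradiction: if on a positive measure set of times some matrix in the fiber acted on $\dot\gamma(t)$ differently from $(G\circ\gamma)'(t)$ by at least $\varepsilon$, then $t\mapsto\{M\in J(\gamma(t)):\|M\dot\gamma(t)-(G\circ\gamma)'(t)\|\ge\varepsilon\}$ would be measurable with nonempty closed values on that set and would admit a measurable selection, contradicting the almost-everywhere identity above; a countable union over rational $\varepsilon$ concludes. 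Equivalently, one may organize the whole argument by rows: writing $J_i$ for the set of $i$-th rows, condition \eqref{eq:conservativeMappings} is equivalent to each $J_i$ being a scalar conservative gradient (a selection of $J_i$ extends to a selection of $J$ by measurable selection), so the statement reduces row by row to the gradient case of \cite{bolte2020conservative}, and $G=(G_1,\ldots,G_m)$ is obtained by stacking the row potentials.
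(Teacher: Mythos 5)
Your proof is correct and follows the same skeleton as the paper's: for (i)$\Rightarrow$(ii) you integrate the chain rule along a loop exactly as the paper does, and for (ii)$\Rightarrow$(i) you use the same line-integral construction of $G$, whose well-posedness is precisely the vanishing-circulation hypothesis, followed by Lebesgue differentiation along an arbitrary absolutely continuous path. The one place where you genuinely diverge is the upgrade from ``for each measurable selection $V$, almost every $t$'' to ``for almost every $t$, for all $M \in J(\gamma(t))$''. The paper does this with a Castaing representation: by \cite[Corollary 18.15]{aliprantis2005infinite} there is a countable family $(V_i)_{i \in \NN}$ of measurable selections with $J(\gamma(t)) = \mathrm{cl}\{V_i(\gamma(t))\}_{i \in \NN}$ for every $t$; intersecting the countably many full-measure sets on which each $V_i$ satisfies the chain rule, and using continuity of $M \mapsto M\dot{\gamma}(t)$ on the closure, finishes the proof. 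You instead argue by contradiction, selecting measurably from the ``bad'' fibers $\{M \in J(\gamma(t)) : \|M\dot{\gamma}(t) - \tfrac{d}{dt}G(\gamma(t))\| \ge \varepsilon\}$ and taking a countable union over rational $\varepsilon$. These are dual faces of the same measurable-selection machinery and both are valid; the Castaing route is slightly more economical since it quotes a single off-the-shelf result, whereas your route must additionally justify that the bad-fiber map is measurable (it is the intersection of the measurable closed-valued map $J \circ \gamma$ with a constraint depending measurably on $t$ --- standard, but a step you only assert). Your closing alternative, reducing row by row to the scalar conservative-gradient case of \cite{bolte2020conservative} and stacking the row potentials, is also sound, and is in fact exactly how the paper itself performs the multivariate-to-scalar reduction later, in the proof of Theorem \ref{th:characterizationConservativity}.
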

In the situation of Lemma \ref{lem:setValuedMapping} (i), we will say that $J$ is a conservative jacobian for $G$, in which case $G$ is defined up to constants using line integrals. Similarly as in Definition \ref{def:conservativeMapForF}, Lipschicity of $G$ follows from the chain rule (i) in Lemma \ref{lem:setValuedMapping}.

\begin{remark}
		\label{rem:integralDerivativeDefinable}
		Lemma \ref{lem:setValuedMapping} holds \textit{mutatis mutandis} by taking $\gamma$ to be definable $C^1$ curve for definable conservativity in Definition \ref{def:conservativeMapping}.
\end{remark}

\subsection{Variational Stratification}
The notion of Variational Stratification was first exposed in \cite{bolte2007clarke} and found an interesting application to conservativity \cite{bolte2020conservative}.
The tangent space at a point $x$ of a differentiable manifold $M$ is denoted by $T_xM$\footnote{We only consider embedded manifolds and submanifolds}. Given a submanifold $M$ of a finite dimensional Riemannian manifold, it is endowed by the Riemanninan structure inherited from the ambient space. Given $G \colon \RR^p \to \RR^m$ and $M\subset\RR^p$ a differentiable submanifold  on which $G$ is differentiable, we denote by $\mathrm{Jac}_M\, G$ its Riemannian jacobian or even, when no confusion is possible,  $\jac G$.

 A $C^k$ stratification of a (sub)manifold $M$ (of $\RR^p$) is a partition $\SSS=(M_1,\ldots,M_m)$ of $M$ into $C^k$ manifolds having the property that $\cl M_i\cap M_j\neq \emptyset$ implies that $M_j$ is entirely contained in the boundary of $M_i$ whenever $i\neq j$. Assume that a function $G:M \to \RR^m$ is given and that $M$ is stratified into manifolds on which $G$ is differentiable. For $x$ in $M$, we denote by $M_x$ the strata containing $x$ and we simply write $\jac G(x)$ for the jacobian of $G$ with respect to $M_x$.

Stratifications can have many properties, we refer to \cite{dries1996geometric} and references therein for an account on this question and in particular for more on the idea  of a Whitney stratification. 
The definition is as follows: a $C^{r}$-stratification $\SSS=(M_{i})_{i\in I}$ of a manifold $M$ has the
\emph{Whitney-}($a$)\emph{ property,} if for each $x\in\cl M_{i}\cap
M_{j}$ (with $i\neq j$) and for each sequence $(x_{k})_{k\in\NN}\subset
M_{i}$ we have:
\[
\left.
\begin{array}
[c]{ll}
& \underset{k\rightarrow\infty}{\lim}\mathcal{\;}x_{k}\mathcal{\;}=x\\
& \text{}  \\
&
\underset{k\rightarrow\infty}{\lim}\mathcal{\;}T_{x_{k}}M_{i}\mathcal{\;}
=\mathcal{T}
\end{array}
\right\}  \mathcal{\;}\Longrightarrow\mathcal{\;}T_{x}M_{j}\mathcal{\;}
\subset\mathcal{\;T}
\]
where the second limit is to be understood in the Grassmanian, i.e., ``directional", sense. In the
sequel we shall use the term \emph{Whitney stratification} to refer to a
$C^{1}$-stratification with the Whitney-($a$) property. The following can be found for example in \cite[4.8]{dries1996geometric}.

\begin{theorem}[Whitney stratification]
    Let $A_1,\ldots, A_k$ be definable subsets of $\RR^p$, then there exists a definable Whitney stratification $(M_{i})_{i\in I}$ compatible with $A_1,\ldots, A_k$, \emph{i.e.} such that for each $i \in I$, and $t \in \left\{1 , \ldots k\right\}$, either $M_i \subset A_t$ or $M_i \cap A_t = \emptyset$.
    \label{th:wstratification}
\end{theorem}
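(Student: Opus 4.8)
The plan is to build the stratification by refining a cell decomposition, discarding at each stage a definable subset of strictly smaller dimension on which the desired regularity fails, and to control the construction by induction on the ambient dimension $p$ together with a secondary descending induction on the dimensions of the strata. Two features must be secured: the frontier condition built into the definition of a stratification, and the Whitney-$(a)$ property. The crucial point is that every auxiliary set produced along the way is again definable, which is furnished by quantifier elimination (Proposition~\ref{prop:quantifierElimination}); the remaining work is purely geometric.

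First I would invoke the $C^1$ Cell Decomposition Theorem of o-minimal geometry \cite{dries1996geometric,Cos99}, itself established by induction on $p$ from the axioms (the one-variable monotonicity theorem and stability under projection being the essential inputs). It yields a finite partition of $\RR^p$ into connected $C^1$ cells, each a $C^1$ submanifold, compatible with $A_1,\ldots,A_k$ in the stated sense, which settles the compatibility clause. Next I would enforce the frontier condition: for any definable cell $C$ the frontier $\cl C\setminus C$ is definable and of dimension strictly smaller than $\dim C$ (the frontier-dimension inequality of o-minimal dimension theory). Adding all cell frontiers to the family of sets to be respected and re-running the decomposition, then iterating, the dimensions strictly decrease along frontiers and are bounded, so the process stabilizes at a partition in which the closure of each stratum is a union of strata.

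The hard part is the Whitney-$(a)$ property. For each ordered pair of strata $(M_i,M_j)$ with $M_j\subset \cl M_i\setminus M_i$, the Gauss map $x\mapsto T_xM_i$ into the Grassmannian is definable, since the partial derivatives of the definable chart maps are definable. Hence the defect set
\[
 D_{ij} \;=\; \Big\{\, x\in M_j :\ \exists\, (x_n)\subset M_i,\ x_n\to x,\ T_{x_n}M_i\to \mathcal T,\ T_xM_j\not\subset \mathcal T \,\Big\}
\]
is definable by Proposition~\ref{prop:quantifierElimination}, the convergence in the Grassmannian and the non-inclusion being first-order conditions. The key lemma is that $D_{ij}$ has empty interior in $M_j$, so, being definable, $\dim D_{ij}<\dim M_j$; this is exactly the statement that Whitney-$(a)$ holds generically. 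Adjoining $\bigcup_{i,j} D_{ij}$ to the family and re-stratifying, the untouched parts now satisfy Whitney-$(a)$, while all newly separated pieces live in strictly lower dimension; a well-founded descending induction on the dimension profile of the stratification then terminates and produces the required Whitney stratification, the earlier frontier and compatibility refinements being re-applied at each step.

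The main obstacle is precisely the genericity lemma $\dim D_{ij}<\dim M_j$, and this is where o-minimality, as opposed to a bare definable cell structure, is indispensable. I would prove it by curve selection together with the fact that the Gauss map, being definable, admits definable limits along the frontier: were $D_{ij}$ of full dimension in $M_j$, one could select a definable arc inside it, trivialize $M_i$ over a neighborhood of this arc, and derive a contradiction from the impossibility of a definable family of tangent directions failing the inclusion $T_xM_j\subset\mathcal T$ all along the arc once the approach to the frontier is taken through a cell on which the relevant chart is $C^1$. Everything outside this lemma is bookkeeping: definability is guaranteed by Proposition~\ref{prop:quantifierElimination}, and termination by the frontier-dimension inequality.
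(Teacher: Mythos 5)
You should first know how the paper itself treats this statement: it does not prove Theorem~\ref{th:wstratification} at all, but quotes it as a classical result of o-minimal geometry with a pointer to \cite[4.8]{dries1996geometric}. So the only meaningful benchmark is the proof in that literature, and your plan does follow its standard architecture: $C^1$ cell decomposition compatible with $A_1,\ldots,A_k$, refinement to obtain the frontier condition, definability of the Whitney-$(a)$ defect set $D_{ij}$, a genericity lemma $\dim D_{ij}<\dim M_j$, and a descending induction on dimension. Your definability bookkeeping is sound: after rephrasing the sequential conditions in $\varepsilon$--$\delta$ form over the (semialgebraic) Grassmannian, $D_{ij}$ is indeed definable via Proposition~\ref{prop:quantifierElimination}, since the Gauss map of a definable $C^1$ cell is definable. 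One secondary issue: your termination argument for the frontier condition is not justified as stated, because re-running the decomposition subdivides old cells and thereby creates \emph{new} frontier pieces inside them, so ``dimensions strictly decrease along frontiers'' is not the invariant that stops the iteration; the standard fix is a descent by dimension in which the top-dimensional cells are frozen (the added frontiers, having smaller dimension, are disjoint from them) and one recurses only on the closed lower-dimensional remainder. This is fixable.

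The genuine gap is exactly the step you call the main obstacle, the lemma $\dim D_{ij}<\dim M_j$, for which you give no real argument: ``derive a contradiction from the impossibility of a definable family of tangent directions failing the inclusion $T_xM_j\subset\mathcal{T}$ all along the arc'' is a restatement of what must be proved, and worse, taken literally it proves too much. Whitney-$(a)$ \emph{can} fail along a definable curve of points of $M_j$, with all attendant data forming perfectly good definable families. Concretely, let $V=\{(x,y,z)\in\RR^3:\ x^2=zy^2\}$ (the Whitney umbrella), cross it with a line with coordinate $w$, and stratify $V\times\RR\subset\RR^4$ by $M_i$, the smooth part, and $M_j$, the plane $\{x=y=0\}$: approaching $(0,0,0,w_0)$ along $t\mapsto(t^2,t,t^2,w_0)$, the tangent planes of $M_i$ converge to a limit not containing the $z$-direction, so condition $(a)$ fails at every point of the line $\{x=y=z=0\}$, a definable arc in $M_j$, while it holds on the rest of $\cl M_i\cap M_j$. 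No contradiction can be extracted from this arc, because the violating direction (the $z$-direction) is transverse, not tangent, to the arc of bad points. Any correct proof must therefore use \emph{full}-dimensionality of $D_{ij}$ in $M_j$, precisely in order to choose the test arc inside $D_{ij}$ tangent to the violating direction $v(x)\in T_xM_j\setminus\mathcal{T}$; one then builds, by definable choice, a definable one-parameter family (``wing'') of curves in $M_i$ over that arc and shows that its transverse derivatives converge to $v(x)$ while lying in the tangent planes of $M_i$ --- and this interchange of limit and derivative for a definable family is itself a nontrivial o-minimal fact. That is the mathematical heart of the theorem (in the literature it is carried out via Verdier-type $(w)$-regularity or $\Lambda$-regular decompositions), and it is missing from your proposal; everything you do supply is, as you say yourself, bookkeeping.
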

For the rest of this section, $k$ denotes an arbitrary positive integer.
\begin{definition}[Variational stratification \cite{bolte2007clarke}]{\rm
								Let $G \colon \RR^p \to \RR^m$, be locally Lipschitz con\-ti\-nuous, let $J \colon \RR^p \rightrightarrows \RR^{m \times p}$ be a set valued map and let $k\geq 1$. We say that the couple $(G,J)$ has a $C^k$ {\em variational stratification} if there exists a $C^k$ Whitney stratification $\SSS = (M_i)_{i \in I}$ of $\RR^p$, such that $G$ is $C^k$ on each stratum  and for all $x \in \RR^p$,
				\begin{align}\label{projf}
								 J(x) \mathrm{Proj}_{T_{M_x}(x)} = \left\{ \jac G(x) \right\},
				\end{align}
				 where $\jac G(x)$ is the jacobian of $G$ restricted to the active strata $M_x$ containing $x$.	}	\label{def:projectionFormula}
\end{definition}

\begin{theorem}[Characterization of conservativity]
				Let $J \colon \RR^p \rightrightarrows \RR^{m \times p}$ be a definable, nonempty, locally bounded, graph closed set valued mapping and $G \colon \RR^p \to \RR^m$ be a definable locally Lipschitz function. Then the following are equivalent
    \begin{itemize}
        \item $J$ is conservative for $G$.
        \item $(G, J)$ admit a $C^k$ variational stratification.
    \end{itemize}
    For the reverse implication, $J$ and $G$ need not to be definable.
    \label{th:characterizationConservativity}
\end{theorem}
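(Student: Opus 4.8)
The plan is to prove the two implications separately, concentrating the measure-theoretic work in the easy direction and the o-minimal work in the hard one. Throughout I use Lemma \ref{lem:setValuedMapping} to replace ``conservativity'' by the operational chain rule: $J$ is conservative for $G$ if and only if for every absolutely continuous $\gamma$ and almost every $t$ one has $\frac{d}{dt}G(\gamma(t)) = M\dot\gamma(t)$ for all $M \in J(\gamma(t))$.

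For the reverse implication (a variational stratification yields conservativity, with no definability needed), I would verify this chain rule. The first ingredient is that an absolutely continuous curve is tangent to the active stratum almost everywhere: for a.e. $t$, $\dot\gamma(t) \in T_{\gamma(t)} M_{\gamma(t)}$. I would prove this locally by writing each stratum as the zero level set of a $C^1$ submersion $\Phi$; on the measurable set $\{t : \gamma(t) \in M_i\}$ the map $t \mapsto \Phi(\gamma(t))$ is constant, hence its a.e.-derivative $D\Phi(\gamma(t))\dot\gamma(t)$ vanishes at density points, forcing $\dot\gamma(t) \in \ker D\Phi(\gamma(t)) = T_{\gamma(t)}M_i$. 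Granting tangency, the projection in \eqref{projf} acts as the identity on $\dot\gamma(t)$, so for a.e. $t$ and every $M \in J(\gamma(t))$ one gets $M\dot\gamma(t) = M\,\mathrm{Proj}_{T_{\gamma(t)}M_{\gamma(t)}}\dot\gamma(t) = \jac G(\gamma(t))\dot\gamma(t)$. It remains to identify the left side of the chain rule, namely $\frac{d}{dt}G(\gamma(t)) = \jac G(\gamma(t))\dot\gamma(t)$ a.e.; this is the chain rule for Lipschitz functions that are $C^1$ along a Whitney stratification, where the Whitney-($a$) property is used to control the curve's excursions between strata (see \cite{bolte2007clarke,bolte2020conservative}). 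Combining the two identities gives the chain rule, hence the loop identity \eqref{eq:conservativeMappings}, and therefore conservativity, using no definability.

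For the direct implication I would first build a $C^k$ Whitney stratification $\SSS=(M_i)$ of $\RR^p$, via Theorem \ref{th:wstratification} and standard o-minimal refinements, on whose strata $G$ is $C^k$. Assuming $J$ conservative, Lemma \ref{lem:setValuedMapping} furnishes the chain rule. Fix a stratum $M_i$ of dimension $d \ge 1$. For a $C^1$ curve $\gamma$ that stays inside $M_i$, the composition $G\circ\gamma$ is genuinely $C^1$ with derivative $\jac G(\gamma(t))\dot\gamma(t)$, so the chain rule gives $M\dot\gamma(t) = \jac G(\gamma(t))\dot\gamma(t)$ for a.e. $t$ and all $M \in J(\gamma(t))$. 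Foliating $M_i$ by such curves in $d$ linearly independent directions and applying Fubini, the projection identity $M\,\mathrm{Proj}_{T_x M_i} = \jac G(x)$ holds for every $M \in J(x)$ at almost every $x \in M_i$. The exceptional set $B_i$ is definable by quantifier elimination (Proposition \ref{prop:quantifierElimination}), and being Lebesgue-null within the $d$-manifold $M_i$, it has dimension strictly less than $d$. Here definability is indispensable: it upgrades ``almost everywhere'' to ``outside a lower-dimensional definable set,'' which can then be removed.

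Finally I would refine $\SSS$ to be compatible with $\bigcup_i B_i$ and induct downward on stratum dimension. The subtle point of the recursion is that refinement changes the active tangent spaces, so the projection formula must be re-established on the newly created lower-dimensional strata rather than merely on the original ones; the $0$-dimensional strata satisfy \eqref{projf} trivially since there the projection is the zero map. At each dimension the previous step confines failure to a strictly lower-dimensional definable set, and o-minimal finiteness (finitely many strata, strict dimension drop) guarantees termination with a stratification satisfying \eqref{projf} everywhere. I expect the main obstacle to be exactly this forward direction: passing from the almost-everywhere, single-direction relation produced by curves to the pointwise identity for all tangent directions and all matrices $M \in J(x)$ simultaneously, and threading the Fubini-plus-dimension-reduction through a downward induction that preserves the Whitney and $C^k$ properties, all of which rests essentially on definability.
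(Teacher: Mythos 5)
Your proposal is correct in outline, but it takes a genuinely different route from the paper. The paper's proof is essentially a reduction to known results: it first reduces to scalar outputs by pairing each row of $J$ with the corresponding coordinate of $G$ (Lemmas 3 and 4 of \cite{bolte2020conservative} combined with Lemma \ref{lem:setValuedMapping}), then quotes \cite[Theorem 4]{bolte2020conservative} for the direct implication and \cite[Theorem 3]{bolte2020conservative} (see also \cite{davis2018stochastic}) for the reverse one, the only remaining work being to merge the coordinatewise stratifications into a common Whitney stratification via Theorem \ref{th:wstratification}, using stability of the projection formula under refinement. You instead argue both directions from first principles. Your reverse direction is the density-point argument that underlies the quoted Theorem 3, and it is sound; the one inaccuracy is your appeal to Whitney-($a$) ``to control the curve's excursions'': once the projection formula \eqref{projf} is assumed, no excursion control is needed, because $\frac{d}{dt}G(\gamma(t))$ is already known to exist almost everywhere (as $G\circ\gamma$ is absolutely continuous) and can therefore be computed along sequences $t_k\to t$ staying in $\gamma^{-1}(M_{\gamma(t)})$ at density points, exactly as you do for tangency; Whitney-($a$) is really needed only when one wants to establish the projection formula for the Clarke subdifferential, which is not at stake here. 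Your direct direction --- foliate each stratum by curves, use the chain rule plus Fubini to get the projection identity almost everywhere on the stratum, use definability to upgrade ``Lebesgue-null'' to ``strictly lower-dimensional'', then induct downward in dimension while re-proving the formula on newly created strata --- is essentially the strategy the paper deploys for the harder Theorem \ref{th:setValuedDefinable} and Lemma \ref{lem:setVlauedDefinable}; in your setting it is legitimately simpler, since full conservativity yields a chain rule along arbitrary absolutely continuous curves, so the foliating curves can be used directly, whereas in Lemma \ref{lem:setVlauedDefinable} only definable curves and selections are available, which forces the ODE-plus-polynomial-approximation contradiction argument there. In short, the paper's route is shorter but leans entirely on external theorems; yours is self-contained, makes explicit where definability enters (measurability of the exceptional sets needed for Fubini, and the null-implies-dimension-drop upgrade), and makes transparent why the reverse implication requires no definability.
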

\begin{proof}
				This result is essentially known and we point out the arguments for completeness. First, $J$ is conservative for $G$ if and only if, the projection of each row of $J$ is conservative for the corresponding coordinate of $G$, this is Lemma 3 and 4 in \cite{bolte2020conservative} in combination with Lemma \ref{lem:setValuedMapping} above. Hence we may reason coordinatewise. 

				For the direct implication, it results from \cite[Theorem 4]{bolte2020conservative} that each coordinate of $G_i$ and the corresponding line of $J$, $J_i$ admit a variational projection formula (\cite[Definition 5]{bolte2020conservative}) for each $i=1,\ldots,m$, this corresponds to the variational formula introduced in \cite{bolte2007clarke}, which is limited to univariate outputs. The variational projection formula is stable by considering submanifolds and hence is stable when refining a given stratification. Hence thanks to Theorem \ref{th:wstratification}, we may find a common Whitney stratification such that the projection formula holds for each coordinate $G_i$ of $G$ and the corresponding row $J_i$ of $J$. This results in the formula given in Definition \ref{def:projectionFormula}.

				For the reverse implication, similarly as above, the variational stratification in Definition \ref{def:projectionFormula} implies the projection formula of \cite[Definition 5]{bolte2020conservative} for each coordinate of $G$ with the corresponding line of $J_i$. By \cite[Theorem 3]{bolte2020conservative} (see also \cite{davis2018stochastic} which states the result for the Clarke subdifferential), each row of $J$ is conservative for the corresponding coordinate of $G$ which implies that $J$ is conservative for $G$ by \cite[Lemma 4]{bolte2020conservative}. This does not require definability.
\end{proof}
\subsection{Definably conservative jacobians}
We start with a preliminary lemma which will then be applied recursively toward a proof of a variational stratification property from which Theorem \ref{th:definablyConservative} will follow.
\begin{lemma}
				Let $J \colon \RR^p \rightrightarrows \RR^{m \times p}$, be a definably conservative jacobian.
				Let $G \colon x \mapsto \int_0^1 V(\gamma(t)) \dot{\gamma}(t) dt$ for any $C^1$ definable $\gamma \colon [0,1] \to \RR^p$ with $\gamma(0) = 0$ and $\gamma(1) = x$ and any definable selection $V$ as in Definition \ref{def:conservativeMapping}.
				Then there exists a finite number of definable open sets $U_1,\ldots,U_N$ in $\RR^p$ such that $\cup_{i=1}^N \mathrm{cl}(U_i) = \RR^p$, $G$ is continuously differentiable on each set and $J = \{J_G\}$, the jacobian of $G$, on each set.
				\label{lem:setVlauedDefinable}
\end{lemma}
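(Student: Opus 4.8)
The plan is to prove the statement directly, without invoking the variational stratification of Theorem~\ref{th:characterizationConservativity} (that characterization presupposes full conservativity of $J$, which is ultimately what one wants to deduce). Instead I would build everything on a fundamental theorem of calculus for $J$ along definable curves, and then exploit the rigidity of definable objects to upgrade an averaged integral identity into a pointwise single-valuedness statement.

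\emph{Regularity of $G$ and a fundamental theorem of calculus.} First I would fix a global definable selection $v$ of $J$ (definable choice) and set $G(x)=\int_0^1 v(tx)\,x\,\dd t$; local boundedness of $J$ makes the integrand bounded with finitely many discontinuities along the segment, so $G$ is finite. Given two definable $C^1$ paths from $0$ to $x$ carrying definable selections, concatenating one with the reversal of the other produces a definable loop; reparametrizing each piece by the semialgebraic map $s\mapsto 3s^2-2s^3$ (whose derivative vanishes at the endpoints) renders the loop $C^1$ while leaving the line integral unchanged by the substitution rule. Definable conservativity, i.e.\ \eqref{eq:conservativeMappings}, then forces the two integrals to agree, so $G$ is well defined. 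The same concatenation argument, applied to the defining path of $a$ followed by any definable $C^1$ curve $\gamma$ from $a$ to $b$, yields the identity
\[
\int_0^1 V(\gamma(t))\dot{\gamma}(t)\,\dd t = G(b)-G(a)
\]
for every definable $C^1$ curve $\gamma$ and every definable selection $V$ of $J$ along $\gamma$. Taking $\gamma$ a short segment and bounding the integrand on a compact set shows $G$ is locally Lipschitz.

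\emph{Continuous selections are gradients.} The crux is a pointwise statement: if $U$ is a definable open set on which $J$ admits a continuous definable selection $v$, then $G$ is differentiable on $U$ with $\nabla G=v$. Indeed, fixing $a\in U$ and a unit vector $d$, for small $\epsilon$ the segment $t\mapsto a+t\epsilon d$ stays in $U$, is $C^1$ and definable, so the identity above gives $G(a+\epsilon d)-G(a)=\epsilon\int_0^1 v(a+t\epsilon d)\,d\,\dd t$. Dividing by $\epsilon$ and letting $\epsilon\to 0$, continuity of $v$ produces the directional derivative $v(a)\,d$, uniformly over $\|d\|=1$; hence $G$ is Fréchet differentiable at $a$ with $\nabla G(a)=v(a)$, and continuity of $v$ upgrades this to $G\in C^1(U)$ with Jacobian $v$.

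\emph{Single-valuedness and construction of the $U_i$.} Next I would show $J$ is single-valued off a set of dimension $<p$. The non-singleton locus $\{x:\ \exists\,M_1,M_2\in J(x),\ M_1\neq M_2\}$ is definable by quantifier elimination; were it $p$-dimensional it would contain a definable open set $W$, and definable choice would furnish two definable selections $s_1,s_2$ of $J$ on $W$ with $s_1\neq s_2$ pointwise. Restricting $W$ to the dense open subset where both are continuous (generic continuity of definable functions) and applying the previous paragraph twice gives $\nabla G=s_1$ and $\nabla G=s_2$ on $W$, hence $s_1=s_2$, a contradiction. Thus the single-valued locus $S$ is definable with $\dim(\RR^p\setminus S)<p$, so it has dense interior; on that interior the single value of $J$ is a definable function, which is $C^1$ on a dense open definable subset $S_1$ by generic smoothness, and there it is a continuous definable selection. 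The previous paragraph then gives that $G$ is $C^1$ on $S_1$ with $J=\{\nabla G\}=\{J_G\}$. Finally I would take $U_1,\dots,U_N$ to be the finitely many connected components of the open definable set $S_1$: each is open and definable, $G$ is $C^1$ with $J=\{J_G\}$ on each, and since $S_1$ is dense, $\bigcup_{i=1}^N \cl U_i=\cl S_1=\RR^p$. The main obstacle is exactly the passage from the integral identity, which constrains selections only in an averaged sense, to the pointwise single-valuedness above; this is where definability is indispensable, through the interplay of definable choice (exposing two distinct selections on a putative open non-singleton set), generic smoothness (making those selections continuous so that the gradient argument applies), and dimension theory (converting emptiness of the bad interior into density of the good set). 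A secondary technical point, the definable $C^1$-smoothing of concatenated paths needed to invoke \eqref{eq:conservativeMappings}, is handled uniformly across o-minimal structures by the explicit semialgebraic reparametrization.
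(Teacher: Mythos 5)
Your proposal is correct, and it reaches the conclusion by a genuinely different route than the paper on the crucial step. Both proofs share the same skeleton: first show that the set where $J$ is multivalued cannot contain a definable open set, then on a dense open definable region where $J$ is single-valued and continuous, identify $J$ with the Jacobian of $G$ by integrating along short segments and letting the length tend to zero (this last step is essentially identical in both arguments). The difference is in how multivaluedness on an open set is excluded. The paper picks two definable selections $V_1\neq V_2$ on a ball, builds a unit-norm definable selection $v$ of $\mathrm{Im}(V_1-V_2)^T$, makes everything Lipschitz by stratification, follows the flow of $\dot{\gamma}=v(\gamma)$ -- which is generally \emph{not} definable -- approximates that flow by definable $C^1$ curves via Weierstrass, and passes to the limit in the chain rule of Lemma \ref{lem:setValuedMapping} (Remark \ref{rem:integralDerivativeDefinable}) to force $v=0$ along the flow, a contradiction. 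You avoid the flow, the approximation, and the limit interchange entirely: by varying the direction $d$ of straight segments you identify the full Fr\'echet derivative of $G$ at points where a definable selection is continuous, so any two continuous definable selections on an open set must both equal $J_G$ and hence coincide; the adapted direction field of the paper (needed there because a single fixed direction could lie in the kernel of $V_1-V_2$) becomes unnecessary. What each approach buys: yours is more elementary and self-contained, and it additionally makes explicit two points the paper leaves implicit -- the well-definedness of $G$ (via loop concatenation with the semialgebraic reparametrization $s\mapsto 3s^2-2s^3$, invoking \eqref{eq:conservativeMappings} directly rather than the chain-rule form of Lemma \ref{lem:setValuedMapping}), and the passage from ``the bad set has dimension $<p$'' to the covering $\cup_i\mathrm{cl}(U_i)=\RR^p$, which you get from generic continuity of definable choice functions and finiteness of connected components rather than from the Whitney stratification of Theorem \ref{th:wstratification}; the paper's stratification route, in exchange, gets continuity of the single-valued $J$ on the maximal strata for free from graph closedness and local boundedness, without invoking generic continuity of a chosen selection. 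Both sets of tools are standard in the o-minimal setting, so these substitutions are harmless.
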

\begin{proof}
				Note that it is not known \textit{a priori} if $G$ is definable and therefore the results of \cite{bolte2020conservative,davis2021conservative} do not directly applies. Denote by $\mathcal{R}$ the set where $J$ is single valued, this set is definable by Proposition \ref{prop:quantifierElimination} and we are going to show that its complement has empty interior. Toward a contradiction, suppose that the complement of $\mathcal{R}$ has nonempty interior.  Then, definable choice \cite[4.5]{dries1996geometric} ensures that there exists two definable selections $V_1$ and $V_2$ such that $V_1 \neq V_2$ on a small open ball $B$. Let $v$ be a unit norm definable selection in $\mathrm{Im} (V_1 - V_2)^T$, which exists thanks to \cite[4.5]{dries1996geometric}. Since $v, V1, V2$ are definable, we may stratify using \cite[4.8]{dries1996geometric} so that $v,V1,V2$ can be assumed to be Lipschitz continuous on $B$ (reducing and translating $B$ if necessary). Call $r$ the radius of $B$ and assume without loss of generality that it is centered at $0$. Consider the solution to
				\begin{align*}
								\dot{\gamma}(t) = v(\gamma(t)),\qquad \gamma(0) = 0.
				\end{align*}
				$\gamma$ is $C^1$ and stays in a neighborhood of $0$ for small values of $t$, let's say that $\|\gamma(t)\| < r/2$ for all $t \in [-\alpha,\alpha]$ for a certain $\alpha > 0$. Let $0 < \epsilon < r/2$ be arbitrary and fix $\tilde{\gamma}$, a $C^1$ definable path with $\tilde{\gamma}(0) = 0$ such that
				\begin{align*}
								\max_{t \in\alpha[0,1]} \max\left\{\|\gamma(t) - \tilde{\gamma}(t)\|, \|\gamma'(t) - \tilde{\gamma}'(t)\|  \right\} \leq \epsilon,
				\end{align*}
				take for example a polynomial approximation of $\gamma'$ and its integral, using Weierstrass approximation Theorem.
				Since $\tilde{\gamma}$ is a $C^1$ definable arc which remains in $B$ by construction, for almost all $t$ around $0$, we have using Lemma \ref{lem:setValuedMapping} (see remark \ref{rem:integralDerivativeDefinable}),
				\begin{align*}
								\frac{d}{dt}G(\tilde{\gamma}(t)) = V_1(\tilde{\gamma}(t)) (v(\gamma(t)) + u(t)) = V_2(\tilde{\gamma}(t)) (v(\gamma(t)) + u(t))
				\end{align*}
				where $u(t) :=  \tilde{\gamma}'(t) - v(\gamma(t))$ for all $t$. By construction, we have $\|u(t)\| \leq \epsilon$ for all $t$. We can let $\epsilon \to 0$ along a sequence of such approximations $\tilde{\gamma}$, and using continuity of $V_1$ and $V_2$, we obtain for almost all $t$,
				\begin{align*}
								V_1(\gamma(t)) v(\gamma(t)) = V_2(\gamma(t)) v(\gamma(t))
				\end{align*}
				and therefore $(V_1(\gamma(t)) - V_2(\gamma(t)) v(\gamma(t)) = 0$, for all $t$ by continuity of $v,V_1,V_2$ and $\gamma$. Since $v(\gamma(t)) \in \mathrm{Im}(V_1(\gamma(t)) - V_2(\gamma(t))^T$, this shows that $v(\gamma(t)) = 0$ for all $t$ around $0$. This is contradictory with the fact that $v$ has unit norm. 

				This shows that the complement of $\mathcal{R}$ has empty interior. By stratification, using Theorem \ref{th:wstratification}, there exists $U_1,\ldots,U_N$  strata of maximal dimension such that the complement of $\mathcal{R}$ does not intersect any $U_i$, $i=1,\ldots,N$, and $\cup_{i=1}^N \mathrm{cl}(U_i) = \RR^p$.
				
				By graph closedness and local boundedness, $J$ can be identified with a continuous function on each $U_i$. Let $x \in U_i$ for some $i$, there is a small ball around $x$ such that $J$ is continuous on the ball. By the definition of $G$, for any $v \in \RR^p$ and $t > 0$ such that $x + tv$ remains in this ball,
				\begin{align*}
								\frac{G(x+tv) - G(x)}{t} &= \int_{0}^{1} J\left( x + stv \right) vds \\
								&= \frac{1}{t}\int_0^t J\left( x + sv \right) vds.
				\end{align*}
				Letting $t \to 0$, we have	
				\begin{align*}
								\lim_{t \to 0, t > 0} \frac{G(x+tv) - G(x)}{t} &= J(x)v
				\end{align*}
				where the limit is by continuity of $J$ at $x$. This formula allows to identify the partial derivatives at $x$ of each of the $m$ coordinate components of $G$ with entries of $J(x)$. Since these are continuous at $x$, $G$ is differentiable at $x$, and since $x \in U_i$ was arbitrary and $J$ is continuous on each $U_i$, $G$ is $C^1$ on each $U_i$ with jacobian $J$.
\end{proof}

\begin{theorem}
				Let $J \colon \RR^p \rightrightarrows \RR^{m \times p}$ be a definably conservative jacobian.
				Let $G \colon \RR^p \to \RR$ be defined as in Lemma \ref{lem:setVlauedDefinable}. Then $(G,J)$ admits a $C^k$ variational stratification: $(M_i)_{i\in I}$ a definable Whitney stratification of $\RR^p$ such that $G$ is $C^k$ on each stratum with
				\begin{align*}
								J(x) Q(x) = \{J_G(x)\},
				\end{align*}
				where $Q(x)$ is the matrix representing orthogonal projection to the tangent space of the active stratum $M(x)$ at $x$, seen as a subspace of $\RR^p$.
				\label{th:setValuedDefinable}
\end{theorem}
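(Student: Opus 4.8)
The plan is to establish the projection formula \eqref{projf} stratum by stratum, by applying Lemma \ref{lem:setVlauedDefinable} recursively along a decreasing chain of definable sets of strictly decreasing dimension, and then to organize the resulting pieces into a single Whitney stratification via Theorem \ref{th:wstratification}. First I would apply Lemma \ref{lem:setVlauedDefinable} directly in $\RR^p$: it produces definable open sets $U_1,\dots,U_N$ whose closures cover $\RR^p$, on each of which $G$ is continuously differentiable and $J=\{J_G\}$. The set $\Omega:=\bigcup_i U_i$ is open and dense (as $\mathrm{cl}(\Omega)=\bigcup_i\mathrm{cl}(U_i)=\RR^p$), and on a Whitney refinement of $\Omega$ the active stratum at any point is $p$-dimensional, so $Q(x)$ is the identity and \eqref{projf} reduces to $J(x)=\{J_G(x)\}$, exactly the content of the lemma. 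The complement $A:=\RR^p\setminus\Omega$ is definable with empty interior, hence $\dim A\le p-1$, and this is where the recursion takes place.

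The heart of the argument is the recursive step on $A$. I would stratify $A$ into definable $C^k$ manifolds using Theorem \ref{th:wstratification} and treat each stratum $M$, of dimension $d<p$, separately. The key observation is that the tangential part of $J$ along $M$ is again definably conservative, with potential $G|_M$. Concretely, fix a definable $C^k$ chart $\phi\colon V\to\RR^d$ of $M$ with $V\subset M$ open, and set $\hat G:=G\circ\phi^{-1}$ and $\hat J(\xi):=\{\,A\,D\phi^{-1}(\xi):A\in J(\phi^{-1}(\xi))\,\}$ on $\phi(V)\subset\RR^d$. For any definable $C^1$ loop $\hat\gamma$ in $\phi(V)$, the curve $\gamma:=\phi^{-1}\circ\hat\gamma$ is a definable $C^1$ loop in $\RR^p$ with $\dot\gamma=D\phi^{-1}(\hat\gamma)\,\dot{\hat\gamma}$, and every definable selection of $\hat J\circ\hat\gamma$ arises as $\hat V(t)=V(t)\,D\phi^{-1}(\hat\gamma(t))$ with $V(t)\in J(\gamma(t))$; hence $\int_0^1\hat V(t)\dot{\hat\gamma}(t)\,dt=\int_0^1 V(t)\dot\gamma(t)\,dt=0$ by the definable conservativity of $J$, and the same computation identifies $\hat G$ as a line integral of $\hat J$. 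Thus $\hat J$ is a definably conservative jacobian with potential $\hat G$ in dimension $d$, so the (local) argument of Lemma \ref{lem:setVlauedDefinable} applies: on an open dense definable subset of $\phi(V)$, $\hat J$ is single valued and equals $J_{\hat G}$. Transporting back through $\phi$, and using that the columns of $D\phi^{-1}$ span $T_xM$, this says precisely that $J(x)Q(x)$ is single valued and equals the jacobian of $G|_M$ on an open dense definable subset of $M$, so \eqref{projf} holds there. The remaining bad subset of $M$ is definable of dimension $<d$; collecting these over all strata $M$ of $A$ yields a definable set $A'$ with $\dim A'<\dim A$, and I would recurse on $A'$. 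Since the dimension strictly decreases, the recursion terminates after at most $p$ rounds.

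Finally I would assemble the output. The recursion produces finitely many definable sets (the good sets at each level, the stratifying manifolds, and the nested bad sets) on each piece of which \eqref{projf} holds and $G$ is $C^1$ with definable jacobian $x\mapsto J(x)Q(x)$. Applying Theorem \ref{th:wstratification} to this finite family gives a single definable Whitney stratification $(M_i)_{i\in I}$ of $\RR^p$ refining all of them, on which \eqref{projf} persists, since the projection formula is stable under passing to submanifolds (as used in the proof of Theorem \ref{th:characterizationConservativity}). To upgrade $C^1$ to $C^k$, I would note that although $G$ is not known to be definable, its jacobian along each stratum coincides with the definable map $x\mapsto J(x)Q(x)$; refining the stratification once more so that this definable map is $C^{k-1}$ on every stratum forces $G$ to be $C^k$ on each stratum, which completes the variational stratification.

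The step I expect to be the main obstacle is the recursive restriction: showing that the tangential projection of $J$ along a definable submanifold $M$ is itself a definably conservative jacobian whose potential is $G|_M$. The delicate points are that $G$ is not a priori definable, so the potential must be carried purely through line integrals and the chart identification rather than through any definability of $G$; that the pulled-back loops, selections and jacobian remain definable in the \emph{same} o-minimal structure, so that Lemma \ref{lem:setVlauedDefinable} is genuinely applicable in dimension $d$ (cf.\ Remark \ref{rem:sameStructure}); and that nonemptiness, local boundedness and graph-closedness of the projected map are preserved under composition with $\phi^{-1}$ and multiplication by the continuous matrix $D\phi^{-1}$. Once this restriction principle is secured, the dimensional recursion and the final Whitney assembly follow routinely from Theorem \ref{th:wstratification} and Lemma \ref{lem:setVlauedDefinable}.
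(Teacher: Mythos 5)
Your proposal is correct and follows essentially the same route as the paper: a dimension-descending induction in which the jacobian is pulled back through definable charts (your $\hat J = J\,D\phi^{-1}$ is exactly the paper's $\tilde J = J\,J_\phi$), definable conservativity of the pulled-back pair is verified along pulled-back definable loops, Lemma \ref{lem:setVlauedDefinable} is invoked in the lower dimension, the pieces are assembled via Theorem \ref{th:wstratification}, and the $C^1$-to-$C^k$ upgrade uses the same observation that the stratumwise jacobian coincides with the definable map $x\mapsto J(x)Q(x)$. The only cosmetic difference is that the paper uses a global definable $C^1$ diffeomorphism $\theta\colon M\to\RR^{\dim M}$ of each cell (available by cell decomposition), so that Lemma \ref{lem:setVlauedDefinable} applies verbatim on all of $\RR^{\dim M}$ rather than in the ``local'' form you invoke on $\phi(V)$.
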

\begin{proof}
				We shall first prove that $(G, J)$ has a $C^1$ variational stratification. The $C^k$ variational stratification follows by definability of $J$ and the existence of $C^{k}$ stratification \cite[4.8]{dries1996geometric} for $J$ which allows to refine the constructed $C^1$ variational stratification to obtain differentiability up to order $k$. 

				Let $M$ be a connected definable $C^1$ submanifold embedded in $\RR^p$ which is also a connected $C^1$ cell (see \cite[4.2]{dries1996geometric}). Since $M$ is a connected $C^1$ cell, there is a definable $C^1$ diffeomorphism $\theta \colon M \to \RR^{\mathrm{dim} M}$, see for example \cite[Section 6.2]{Cos99}. Set $\phi \colon \RR^{\mathrm{dim} M} \to M$ such that $\phi = \theta^{-1}$. Consider now
				\begin{align*}
								\tilde{G} \colon \RR^{\mathrm{dim} M} &\to \RR^m \\
								\tilde{x} &\mapsto G (\phi(\tilde{x})) \\
								\tilde{J} \colon \RR^{\mathrm{dim} M} &\rightrightarrows  \RR^{m \times \dim M} \\
								\tilde{x} &\rightrightarrows J (\phi(x)) J_{\phi}(\tilde{x})
				\end{align*}
				For any definable $C^1$ path $\gamma \colon [0,1] \to \RR^{\mathrm{dim} M}$, 
				$\phi\circ \gamma$ is definable and so $(\tilde{G}, \tilde{J})$ satisfy the hypotheses of Lemma \ref{lem:setVlauedDefinable} since by Lemma \ref{lem:setValuedMapping} (see remark \ref{rem:integralDerivativeDefinable}), for almost all $t$  
				\begin{align*}
								\frac{d}{dt} \tilde{G}(\gamma (t)) = \frac{d}{dt} G(\phi \circ \gamma (t)) = J(\phi \circ \gamma(t)) (\phi \circ \gamma)'(t) =  J(\phi \circ \gamma(t)) J_{\phi}(\gamma(t)) \dot{\gamma}(t) = \tilde{J}(\gamma(t)) \dot{\gamma}(t),
				\end{align*}
				and $\gamma$ was an arbitrary $C^1$ definable path.

				Hence there exists $\tilde{U}_1 \ldots \tilde{U}_N$ open in $\RR^{\mathrm{dim} M}$ such that the union of their closure is equal the whole $\RR^{\mathrm{dim} M}$, and on each set $U_i$, $\tilde{J}$ is single valued and $\tilde{G}$ is $C^1$ with $\tilde{J}= \{J_{\tilde{G}}\}$. For $i = 1 \ldots N$, set $U_i = \phi(\tilde{U}_i)$ we have that the union of their relative closure is $M$: $\cup_{i=1}^N \mathrm{cl}_M(U_i) = M$. Furthermore, $G = \tilde{G} \circ \theta$ is differentiable on each $U_i$, relative to $M$, and
				\begin{align*}
								\{J_G(x)\} = \{J_{\tilde{G}} (\theta(x)) J_{\theta}(x)\} =  \{J (x) J_{\phi}(\theta(x)) J_{\theta}(x)\}.
				\end{align*}
				We remark that $J_{\phi}(\theta(x)) J_{\theta}(x)$ is the projection on the tangent space of $M$ at $x$ so that each line of $J$ has a single valued projection on the tangent space of $M$ on each $U_i$.
				
				Since $M$ was arbitrary, we may start with $M = \RR^p$ and proceed by induction on the dimension by applying Lemma \ref{lem:setVlauedDefinable} with the above reasoning. We have a $C^1$ variational projection on a dense set of strata (cells) of dimension $p$. We may obtain a Whitney stratification, compatible with this set, so that the projection formula does not hold only on a finite union of strata (cells) of dimension at most $p-1$ \cite[4.8]{dries1996geometric}. Each stratum $M$, being a $C^1$ embedded submanifold as well as a $C^1$ cell and we may repeat the process recursively until the dimension of the set where the projection formula does not hold is zero, i.e. a finite set of points, to obtain the desired Whitney stratification.
\end{proof}

The following corollary combines Theorems \ref{th:characterizationConservativity} and \ref{th:setValuedDefinable}. Theorem \ref{th:definablyConservative} is a special case for $m = 1$.
\begin{corollary}
				Let $J \colon \RR^p \rightrightarrows \RR^{m \times p}$ be a definably conservative jacobian, then $J$ is a conservative jacobian.
				\label{cor:definablyConservative}
\end{corollary}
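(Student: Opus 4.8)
The plan is to produce, from the definably conservative jacobian $J$, an explicit locally Lipschitz potential $G$ and then feed it into the two structural theorems already established. First I would set $G(x) = \int_0^1 V(\gamma(t))\dot{\gamma}(t)\,\dd t$, where $\gamma$ is any definable $C^1$ path joining $0$ to $x$ and $V$ any definable selection of $J\circ\gamma$, exactly as in Lemma \ref{lem:setVlauedDefinable}. The one point that must be secured before this formula can be used is that $G$ is well defined, i.e.\ that its value does not depend on the admissible pair $(\gamma,V)$.

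For well-definedness I would take two admissible pairs $(\gamma_1,V_1)$ and $(\gamma_2,V_2)$, both joining $0$ to $x$, and glue $\gamma_1$ to the reversal of $\gamma_2$ into a loop based at $0$. Naive concatenation only gives a piecewise-$C^1$ loop with possible corners at the two junction points, so I would first reparametrize each arc by a definable monotone time change whose derivative vanishes at the endpoints; this turns the concatenation into a genuine definable $C^1$ loop while leaving each line integral unchanged. Definable conservativity of $J$, applied to this loop together with the glued definable selection, then forces the circulation to vanish, which is precisely the desired path-independence. Local boundedness and graph-closedness of $J$ finally give that $G$ is locally Lipschitz, as noted after Lemma \ref{lem:setValuedMapping}.

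With $G$ in hand, Theorem \ref{th:setValuedDefinable} applies verbatim and yields that $(G,J)$ admits a $C^k$ variational stratification. I would then conclude by invoking the reverse implication of Theorem \ref{th:characterizationConservativity}: existence of a variational stratification implies that $J$ is conservative for $G$, hence that $J$ is a conservative jacobian. The decisive structural point, and the reason the argument closes, is that this reverse implication requires neither $G$ nor $J$ to be definable --- which matters here because $G$ is produced by an integral and is not known \emph{a priori} to belong to the fixed o-minimal structure.

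The heavy lifting has in fact already been carried out inside Lemma \ref{lem:setVlauedDefinable} and Theorem \ref{th:setValuedDefinable} (single-valuedness of $J$ on a dense open set and its upgrade to a Whitney variational stratification). For the corollary itself the only genuinely delicate step is the gluing used for well-definedness of $G$: definable conservativity constrains circulation only along $C^1$ loops, so one must take care that the concatenation of definable $C^1$ arcs is first smoothed --- via the endpoint-flattening reparametrization --- into an admissible loop before the hypothesis can legitimately be invoked.
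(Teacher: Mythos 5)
Your proposal is correct and follows essentially the same route as the paper: the paper's proof of Corollary \ref{cor:definablyConservative} is precisely the combination of Theorem \ref{th:setValuedDefinable} (applied to the line-integral potential $G$ of Lemma \ref{lem:setVlauedDefinable}) with the reverse, definability-free implication of Theorem \ref{th:characterizationConservativity}, which is needed exactly because $G$ is not known a priori to be definable. The one detail you add --- the endpoint-flattening reparametrization turning the concatenation of two definable $C^1$ arcs into a genuine definable $C^1$ loop, so that well-definedness of $G$ follows from definable conservativity --- is a legitimate and welcome filling-in of a point the paper leaves implicit in Lemma \ref{lem:setVlauedDefinable} and Remark \ref{rem:integralDerivativeDefinable}.
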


\subsection{Application to PO formula}
\label{sec:application}
This section describe how Corollary \ref{cor:definablyConservative} can be used to prove that the PO formula provides a conservative gradient in the definable world. We start with a slightly more general result which generalizes the finite selection process described in \cite{bolte2020mathematical} from the discrete to the continuous setting.
\begin{theorem}
				Let $F \colon \RR^{p}\times \RR^{r} \to \RR$ be locally Lipschitz, $D$ a definable conservative jacobian for $F$ and $P \colon \RR^p \rightrightarrows \RR^p$ be a definable set valued field, with closed graph nonempty locally bounded values, such that for all $x \in \RR^p$ and $y \in P(x)$, there exists $u \in \RR^p$ such that $(u,0) \in D(x,y)$ and $y \mapsto F(x,y)$ is constant on $P(x)$. We set
				\begin{align*}
								f \colon x \mapsto F(x,P(x)).
				\end{align*}
				We have that $f$ is continuous, set
				\begin{align*}
								D_f \colon x &\mapsto \left\{ u,\  \exists y \in P(x),\,\, (u,0) \in D(x,y) \right\}.
				\end{align*}
				Then $D_f$ is conservative for $f$, and in particular, $f$ is locally Lipschitz.
				\label{th:definableSelection}
\end{theorem}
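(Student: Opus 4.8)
The plan is to check that $D_f$ is an admissible field and then to prove its conservativity through the definable criterion of Corollary \ref{cor:definablyConservative}, reducing everything to the chain rule for $D$ along a single auxiliary curve. I would first record the elementary structural facts. The value $f(x) = F(x,y)$ is independent of the chosen $y \in P(x)$ because $F$ is constant on $P(x)$, and $f$ is continuous: for $x_k \to x$, local boundedness of $P$ lets one extract $y_k \in P(x_k)$ with $y_k \to y \in P(x)$ along a subsequence, and $f(x_k) = F(x_k,y_k) \to F(x,y) = f(x)$ by continuity of $F$ (a subsequence argument rules out any other accumulation value). For $D_f$: definability follows from quantifier elimination (Proposition \ref{prop:quantifierElimination}); values are nonempty by hypothesis and since $P(x) \neq \emptyset$; local boundedness of both $D$ and $P$ gives local boundedness of $D_f$; and the graph is closed by the usual extraction — if $x_k \to x$, $u_k \to u$ with $(u_k,0) \in D(x_k,y_k)$ for some $y_k \in P(x_k)$, then $y_k \to y \in P(x)$ along a subsequence and closedness of the graph of $D$ yields $(u,0) \in D(x,y)$, hence $u \in D_f(x)$.

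By Corollary \ref{cor:definablyConservative} (with $m = 1$) it suffices to verify definable conservativity. So I fix a definable $C^1$ loop $\gamma \colon [0,1] \to \RR^p$ with $\gamma(0) = \gamma(1)$ and a definable selection $v$ with $v(t) \in D_f(\gamma(t))$; the goal is $\int_0^1 \langle \dot\gamma(t), v(t) \rangle \dd t = 0$. The decisive step is to lift the selection: the set $\{(t,y) \colon y \in P(\gamma(t)),\ (v(t),0) \in D(\gamma(t),y)\}$ is definable with nonempty fibers over $[0,1]$ (this is exactly the statement $v(t) \in D_f(\gamma(t))$), so definable choice (\cite[4.5]{dries1996geometric}) furnishes a definable $y \colon [0,1] \to \RR^r$ with $y(t) \in P(\gamma(t))$ and $(v(t),0) \in D(\gamma(t),y(t))$. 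Writing $\Gamma = (\gamma,y)$, the identity that makes the whole argument work is
\[
F(\Gamma(t)) = f(\gamma(t)) \quad \text{for all } t,
\]
which holds because $y(t) \in P(\gamma(t))$ and $F$ is constant on $P(\gamma(t))$; in particular $t \mapsto F(\Gamma(t))$ is continuous even though $\Gamma$ need not be.

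Being definable, $y$ is $C^1$ on $[0,1]$ outside a finite set $0 = t_0 < t_1 < \dots < t_N = 1$. On each open subinterval $(t_{i-1},t_i)$ the curve $\Gamma$ is $C^1$, hence Lipschitz on compact subintervals, so the chain rule of Lemma \ref{lem:setValuedMapping} applied to the conservative jacobian $D$ of $F$ gives, for almost all $t$,
\[
\tfrac{\dd}{\dd t} F(\Gamma(t)) = (v(t),0)\,\dot\Gamma(t) = \langle v(t),\dot\gamma(t)\rangle,
\]
the $y$-block dropping out precisely because its matrix part is zero. Integrating between $t_{i-1} < a < b < t_i$ and using $F(\Gamma(t)) = f(\gamma(t))$ gives $f(\gamma(b)) - f(\gamma(a)) = \int_a^b \langle v(t),\dot\gamma(t)\rangle \dd t$. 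Now $v$ is bounded on $[0,1]$ (local boundedness of $D_f$ over the compact $\gamma([0,1])$) and $\dot\gamma$ is bounded, so the integrand lies in $L^1$; letting $a \to t_{i-1}^+$ and $b \to t_i^-$, dominated convergence on the right and continuity of $f\circ\gamma$ on the left yield $f(\gamma(t_i)) - f(\gamma(t_{i-1})) = \int_{t_{i-1}}^{t_i} \langle v(t),\dot\gamma(t)\rangle \dd t$. Summing over $i$ telescopes to $\int_0^1 \langle v(t),\dot\gamma(t)\rangle \dd t = f(\gamma(1)) - f(\gamma(0)) = 0$, proving definable conservativity; Corollary \ref{cor:definablyConservative} then upgrades this to full conservativity, and local Lipschitzness of $f$ follows from the locally bounded conservative field as noted after Definition \ref{def:conservativeMapForF}.

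The main obstacle, and the point where constancy of $F$ on the fibers is indispensable, is the transition across the finitely many points where the lifted selection $y$ is discontinuous. The chain rule for $D$ operates only where $\Gamma$ is smooth, and a priori the jumps of $y$ could contribute spurious mass to the circulation integral; what neutralizes them is that $F \circ \Gamma = f \circ \gamma$ remains continuous across these jumps, so the one-sided limits match and the telescoping sum collapses onto the loop value $f(\gamma(1)) - f(\gamma(0)) = 0$. The other ingredient I would flag is the use of definable choice to produce a definable — hence piecewise $C^1$ — lift $y$, without which the auxiliary curve $\Gamma$ would not be amenable to the chain rule at all.
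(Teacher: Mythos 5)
Your proof is correct in substance but follows a genuinely different route from the paper's. The paper proves a chain rule for $f$ along definable $C^1$ arcs: it takes \emph{arbitrary} definable selections $y(\cdot)$ of $P$ and $(u(\cdot),v(\cdot))$ of $D$ along the curve, derives $\frac{\dd}{\dd t}f(x(t)) = \langle \dot x(t),u(t)\rangle + \langle \dot y(t),v(t)\rangle$ piecewise, and then needs two auxiliary lemmas (Lemmas \ref{lem:definableSelectors} and \ref{lem:definableSelectors2}, countable dense families of definable selections) to upgrade this from ``for each definable selection'' to ``for all elements of $D_f(x(t))$'', before specializing to $v=0$ via the hypothesis. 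You reverse the quantifiers: starting from the given definable selection $v$ of $D_f\circ\gamma$, you use definable choice to \emph{lift} it to a definable $y(\cdot)$ with $(v(t),0)\in D(\gamma(t),y(t))$, and apply the chain rule for $D$ along $\Gamma=(\gamma,y)$ to that single lifted selection, the $y$-block dropping out because it is zero. This makes the countable-density lemmas unnecessary and is leaner; what the paper's version buys is the stronger intermediate statement (a chain rule valid simultaneously for every element of $D_f$ along definable arcs), which plugs directly into the characterization of Lemma \ref{lem:setValuedMapping}. Both proofs share the remaining scaffolding: definable choice, piecewise $C^1$ structure of definable curves, continuity of $f\circ\gamma$ to neutralize the jumps of $y$, and the definable-to-general upgrade of Theorem \ref{th:definablyConservative} / Corollary \ref{cor:definablyConservative}.

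One loose end you should tie up: vanishing circulation along loops only shows that $D_f$ is \emph{a} conservative gradient, i.e.\ admits \emph{some} potential, whereas the theorem claims it is conservative \emph{for $f$}. The potential produced by Corollary \ref{cor:definablyConservative} is defined, up to an additive constant, by line integrals of selections of $D_f$, so you must identify those integrals with $f$. Your own telescoping argument does exactly this if you run it on a definable $C^1$ path from $0$ to $x$ rather than on a loop (nothing in the argument uses $\gamma(0)=\gamma(1)$ until the final line): it yields $f(x)-f(0)=\int_0^1 \langle v(t),\dot\gamma(t)\rangle\,\dd t$ for any definable selection $v$, which pins the potential down to $f$ up to the constant $f(0)$. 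So the fix is a one-line rephrasing, but as written your last step proves slightly less than the statement.
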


\begin{proof}
				One can check that $D_f$ is definable thanks to Proposition \ref{prop:quantifierElimination}. Furthermore, it has closed graph and is locally bounded with nonempty values.
				Hence, by Theorem \ref{th:definablyConservative} and Lemma \ref{lem:setValuedMapping}, we only have to prove a chain rule along definable $C^1$ curves. 

				Let $t \mapsto x(t) \in \RR^p$ be a $C^1$ definable path we will obtain definable selections of interest thanks to \cite[4.5]{dries1996geometric}, and draw conclusions thanks to Lemma \ref{lem:definableSelectors} which asserts that definable set valued fields have countable dense definable selections. 

				Let $t \to  y(t) \in\RR^p$ be a definable selection in $t \to P(x(t))$ and let $t \mapsto T(t) = (u(t), v(t))$ be a definable selection in $t \rightrightarrows D(x(t), y(t))$. Definable curves are piecewise continuously differentiable. Hence the functions $t \mapsto f(x(t)) = F(x(t), y(t))$, $t \mapsto x(t)$ and $t \mapsto y(t)$ are differentiable everywhere except at finitely many points, call them $t_1,\ldots, t_M$. Bounded definable curves have left and right limits everywhere, we have for $i=2,\ldots M$ that $t \mapsto y(t)$ is continuous on $(t_{i-1}, t_i)$ and can be extended to an absolutely continuous path on $[t_{i-1}, t_i]$. Hence we can use the fact that $D$ is conservative for $F$, which yields using graph closedness of $P$ and continuity of $F$, for $i=2,\ldots M$
				\begin{align*}
								f\left(x(t_i)\right) - f(x(t_{i-1})) &= F(x(t_i), \lim_{t \uparrow t_i} y(t))- F(x(t_{i-1}), \lim_{t \downarrow t_{i-1}} y(t)) \\
								&= \int_{t= t_{i-1}}^{t = t_i} \frac{d}{dt} F(x(t), y(t)) dt = \int_{t= t_{i-1}}^{t = t_i} \left\langle\dot{x}(t), u(t) \right\rangle + \left\langle\dot{y}(t), v(t) \right\rangle dt.
				\end{align*}
				Finally, by removing the set of discontinuity points of $y$, we obtain for almost all $t$.
				\begin{align*}
								\frac{d}{dt} f(x(t)) = \frac{d}{dt} F(x(t), y(t)) = \left\langle\dot{x}(t), u(t) \right\rangle + \left\langle\dot{y}(t), v(t) \right\rangle
				\end{align*}
				Since $T$ was an arbitrary definable selection of $t \rightrightarrows D(x(t), y(t))$, which admits a countable dense sequence of definable selections thanks to Lemma \ref{lem:definableSelectors}, we have for almost all $t$
				\begin{align*}
								\frac{d}{dt} f(x(t)) = \left\langle\dot{x}(t), u \right\rangle + \left\langle \dot{y}(t), v \right\rangle, \qquad \forall(u,v) \in D(x(t),y(t)).
				\end{align*}
				By the hypotheses, for each $u \in D_f(x)$, one can choose $v = 0$  and hence for almost all $t$
				\begin{align*}
								\frac{d}{dt} f(x(t)) = \left\langle\dot{x}(t), u \right\rangle  \qquad \forall u \text{ such that } (u,0) \in D(x(t),y(t)).
				\end{align*}
				Note that $y$ is an arbitrary definable selection in $t \to P(x(t))$. By Lemma \ref{lem:definableSelectors2}, there is a countable family of such selections $(y_i)_{i \in \NN}$, such that for all $t$, $$D_f(x(t)) = \mathrm{cl} \left\{ u,\, \exists i \in \NN, \, (u,0) \in D(x(t), y_i(t)) \right\}.$$ This implies that for almost all $t$ 
				\begin{align*}
								\frac{d}{dt} f(x(t)) = \left\langle\dot{x}(t), u \right\rangle  \qquad \forall u \in D_f(x(t)).
				\end{align*}
				Hence, we have a chain rule along definable arcs which is the desired result. Indeed, repeating the proof of direct implication in Lemma \ref{lem:setValuedMapping} for definable arcs, we have that $D_f$ is definably conservative and hence conservative thanks to Theorem \ref{th:definablyConservative} and admits $f$ as a potential.
\end{proof}

This result can be applied to partial maximization as the following example shows. It could also be applied to partial minimization or differentiation of more general critical values such as local minima or local maxima (under suitable assumptions). The following example is a repetition of Theorem \ref{th:conservativePartialMinimization} which is based on Theorem \ref{th:definableSelection}. 

\begin{example}[Partial maximization]
				Let $F \colon \RR^{p}\times \RR^{r} \to \RR$ be definable locally Lipschitz and $D \colon \RR^{p+r} \rightrightarrows \RR^{p+r}$ be definable and conservative for $F$ with convex values. Set
				\begin{align*}
								f \colon \RR^p &\to \RR\\
								x &\mapsto \max_{y \in \RR^r} F(x,y),
				\end{align*}
				where the argmax is assumed to be nonempty and locally bounded, call it $P(x) \subset \RR^r$. Set 
				\begin{align*}
								D_f \colon \RR^p &\rightrightarrows \RR^p\\
								x &\mapsto \conv\left\{ u,\  \exists y \in P(x),\,\, (u,0) \in D(x,y) \right\},
				\end{align*}
				one can use Theorem \ref{th:definableSelection} to show that $D_f$ is conservative for $f$.

				First $P$ is assumed to be have nonempty and locally bounded values, furthermore, it has a closed graph by continuity of $F$ and it is definable as partial maxima may be expressed using first order formula as in Proposition \ref{prop:quantifierElimination}. Second, by \cite[Example 10.12]{rockafellar1998variational}, for all $x \in \RR^p$ and $y \in P(x)$ there is $u \in \RR^p$ such that $(u,0) \in \partialc F^c(x,y) \subset D(x,y)$ (the inclusion follows from \cite[Corollary 1]{bolte2020conservative} and convexity of $D(x,y)$). Finally $y \mapsto F(x,y)$ is constant on $P(x)$ for all $x$ because the function value corresponds to the maximum, which is unique.

				This shows that $D$ and $P$ comply with Theorem \ref{th:definableSelection} and Theorem \ref{th:definableSelection} follows.
				\label{ex:partialMax}
\end{example}

\section{Failure of parametric optimality formula}
\label{sec:failure}
This section is dedicated to the construction of the function $g$ in Theorem \ref{th:counterExample}. We start with the construction of a fractal set $C$ and then describe the counterexample which will be based on the distance functions to $C$.
\subsection{A fractal set}
\label{sec:fractalSet}
Let $C$ be the fractal set which construction is described in Figure \ref{fig:fractal2}. The construction of $C$ is similar to the one described in \cite{whitney1935function} to provide a counterexample to Morse-Sard theorem. This construction was also used in \cite{rioszertuche2020pathological} to provide a subgradient sequence on a path differentiable function which fails to have dissipative and minimizing properties.
\begin{figure}[t]
				\centering
				\includegraphics[width=.8\textwidth]{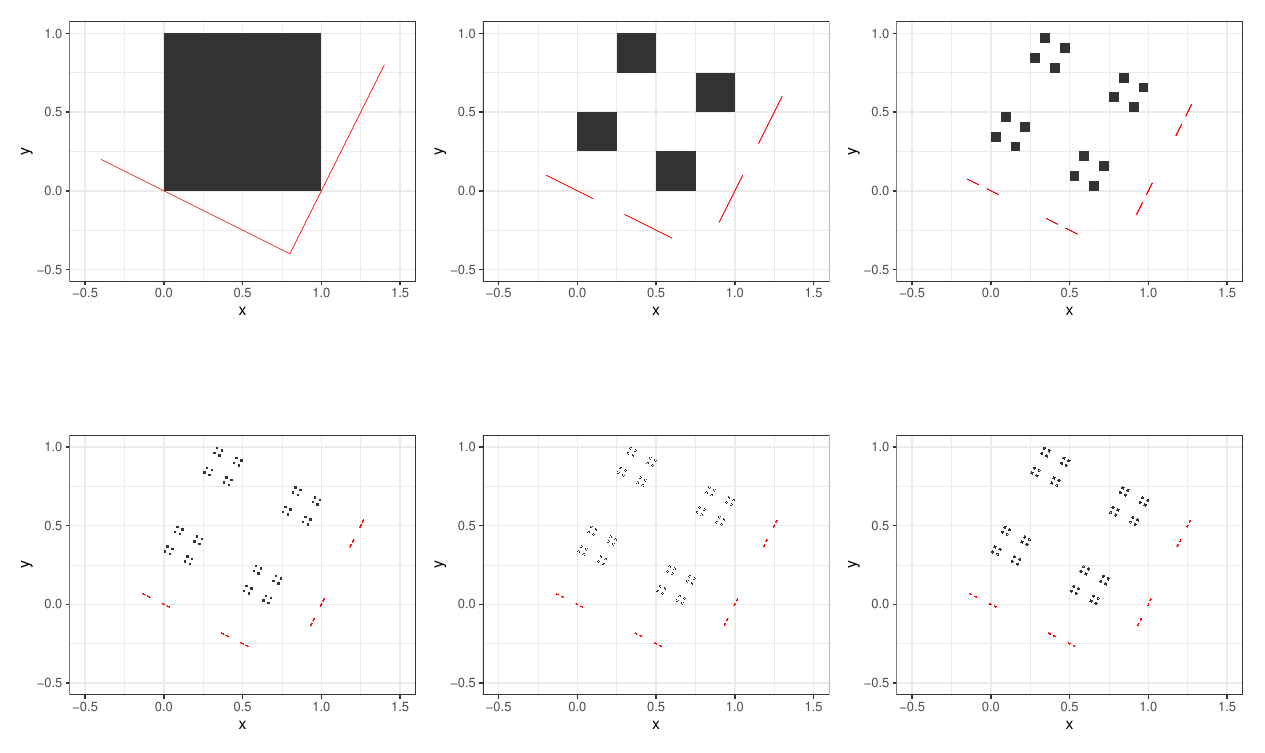}
				\caption{The fractal construction in Section \ref{sec:failure}. We start with the closed unit square in black. It is split into four copies of size one fourth the original square. This process is repeated recursively on each square ad infinitum. The additional red lines represent projection of these sets on rotated axes. Considering $C_i$, $i \in \NN$, the set obtained after $i$ steps ($C_0$ is the original square), we have $C_{i+1} \subset C_i$ for all $i$,  we let the limiting set to be $C = \cap_{i \in \NN} C_i$, which is closed. The projection of $C$ on each axes are full segments. Furthermore, in the limit, both projections on rotated axes are Cantor sets of zero measure.}
				\label{fig:fractal2}
\end{figure}

The set is defined as $C = \cap_{i \in \NN} C_i$, where for each $i \in \NN$, $C_i$ is the union of $4^i$ squares of size $1 / 4^i$. Furthermore, these sets form a nested decreasing sequence for the inclusion partial order. The set $C$ has the following properties.
\begin{itemize}
				\item $C$ is closed as an intersection of closed sets, hence it is compact.
				\item The projection of $C$ on each axis are full segments $[0,1] \times \{0\}$ and $\{0\} \times [0,1]$, we denote them by $\proj_x(C)$ and $\proj_y(C)$. Indeed, since we have a nested sequence $\proj_x(C) = \cap_{i \in \NN} \proj_x(C_i) = \cap_{i \in \NN}[0,1] = [0,1]$.
				\item In particular, $C$ is nonempty.
				\item The projection on each axis rotated counterclock-wise by an angle of $\arctan(2)$ is a Cantor set of zero measure (see Figure \ref{fig:fractal2}). Indeed, at each step there is a constant proportion of each segment which is removed from the projection, this is the simplest construction of Cantor sets.
				\item For each $i \geq 1$, there is a finite number of $x \in [0,1]$ for which the vertical line starting at $x$ intersects $C_i$ at two distinct squares, call this set $X_i$. We have $X_i \subset X_{i+1}$ and for each $x \in [0,1]$, $x \not \in X_i$, the intersection of the vertical line at $x$ and $C_i$ is at a single square. Set $X = \cup_{i \in \NN}X_i$, $X$ is denumerable and for each $x \in X$, the vertical line starting at $x$ intersects $C$ at two distinct points, for all other $x \in [0,1]$ this intersection is a singleton.
				\item Let $f \colon \RR \to \RR$ be a function such that for some $i \in \NN$, $(x,f(x)) \in C_i$ for all $x \in [0,1]$, then the total variation of $f$ is at least $i$. Hence if $f$ satisfies this property for all $i$, that is $(x,f(x)) \in C$ for all $x \in [0,1]$, then $f$ has infinite total variation.
\end{itemize}

\subsection{Construction and proof of the counterexample}
Consider the following Lipschitz function
\begin{align*}
				f \colon \RR^2 &\to \RR \\
				z &\mapsto - \dist\left( z, C \right).
\end{align*}
Set $O = \RR^2 \setminus C$, $O$ is open and we have for all $z \in O$
\begin{align*}
				f(z) = - \min_{c \in C}	\|z - c\| = \max_{c \in C} -\|z-c\|.
\end{align*}
Each function $f_c \colon z \mapsto -\|z-c\|$ is $C^1$ on $O$ and both $f_c$ and $\nabla f_c$ are jointly continuous with respect to $z$ and $c$ on $O \times C$. This shows that $f$ is lower $C^1$ on $O$ \cite[Definition 10.29]{rockafellar1998variational}. Hence we have that $f$ is subdifferentially regular on $O$ \cite[Theorem 10.31]{rockafellar1998variational}. Combining with Lemma \ref{lem:projZeroMeasure} we have
\begin{lemma}
				$f$ is path differentiable.
				\label{lem:fPathDifferentiable}
\end{lemma}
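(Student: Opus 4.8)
The plan is to verify directly that $\partialc f$ is a conservative gradient for $f$ in the sense of Definition~\ref{def:conservativeMapForF}, which is exactly the assertion that $f$ is path differentiable. By the chain-rule characterization (Lemma~\ref{lem:setValuedMapping} with $m=1$), it suffices to fix an arbitrary absolutely continuous arc $\gamma \colon [0,1] \to \RR^2$ and an arbitrary measurable selection $t \mapsto v(t) \in \partialc f(\gamma(t))$, and to establish that $\frac{\dd}{\dd t} f(\gamma(t)) = \langle v(t), \dot\gamma(t)\rangle$ for almost every $t$. The natural device is the decomposition $[0,1] = A \cup B$ with $A = \{t : \gamma(t) \in O\}$ (open, by continuity of $\gamma$) and $B = \{t : \gamma(t) \in C\}$.

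On $A$ I would use the regularity already obtained: $f$ is lower $C^1$, hence subdifferentially regular, on $O$. Since subdifferentially regular locally Lipschitz functions are path differentiable \cite{valadier1989entrainement,davis2018stochastic,bolte2020conservative}, and since on each connected component (an open interval) of $A$ the arc $\gamma$ remains inside $O$, the chain rule holds for almost every $t \in A$. This contributes the genuine variation of $f$ along $\gamma$ and is routine.

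The hard part is the set $B$, where $f$ is not regular and $\partialc f(\gamma(t))$ is a nontrivial convex hull of unit vectors pointing toward nearby points of $C$. Here I would exploit the geometry of $C$ via Lemma~\ref{lem:projZeroMeasure}: the projections of $C$ onto the two orthogonal rotated axes $e_1,e_2$ (rotation by $\arctan 2$) are Lebesgue-null Cantor sets. For $i=1,2$ the scalar function $t \mapsto \langle \gamma(t), e_i\rangle$ is absolutely continuous and takes values in the null set $\proj_{e_i}(C)$ for every $t \in B$; hence its derivative $\langle \dot\gamma(t), e_i\rangle$ vanishes for almost every $t \in B$, because an absolutely continuous function has zero derivative almost everywhere on the preimage of a Lebesgue-null set. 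As $\{e_1,e_2\}$ is a basis of $\RR^2$, this forces $\dot\gamma(t)=0$ for almost every $t \in B$. Consequently $\langle v(t),\dot\gamma(t)\rangle = 0$ there for any selection $v$, while the left-hand side $\frac{\dd}{\dd t}f(\gamma(t))$ also vanishes almost everywhere on $B$ (the composition $f\circ\gamma$ is absolutely continuous and identically $0$ on $B$, since $f\equiv 0$ on $C$). Thus the chain rule holds on $B$ with both sides equal to zero.

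Combining the two contributions, the chain rule holds for almost every $t \in [0,1]$, which is precisely the integral identity \eqref{eq:conservative} defining conservativity; since $\gamma$ and $v$ were arbitrary, $\partialc f$ is conservative and $f$ is path differentiable. I expect the only delicate point to be the measure-theoretic step ``absolutely continuous image inside a null set forces a vanishing derivative,'' which is exactly the content isolated in Lemma~\ref{lem:projZeroMeasure}; once the arc is split according to whether it lies in $O$ or in $C$, the remainder is bookkeeping.
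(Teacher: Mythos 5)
Your proposal is correct, and its architecture is the same as the paper's: split the time set according to whether $\gamma(t)$ lies in $C$ or in $O=\RR^2\setminus C$, use lower-$C^1$/subdifferential regularity of $f$ on $O$ (via the chain rule of \cite{davis2018stochastic} for regular functions) on the first piece, and use the null projections of $C$ onto the rotated axes to force $\dot\gamma(t)=0$ almost everywhere on the second piece, where both sides of the chain rule then vanish by Lipschitz continuity of $f$. The differences are in execution, and they are worth noting. On the $O$-part, the paper localizes around each ``bad'' time with small compact intervals and concludes with a Vitali covering argument, whereas you exhaust the open set $A$ by its connected components; both are fine, though you still need to exhaust each component by compact subintervals before invoking the regular chain rule. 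On the $C$-part, the paper goes through Lemma \ref{lem:projZeroMeasure} (a measurable-selection and change-of-variables argument in the spirit of \cite{rioszertuche2020pathological}, stated for Lipschitz curves) and then Lemma \ref{lem:projZeroMeasure2} (a Lipschitz reparametrization to extend to absolutely continuous curves). You instead apply, coordinate-wise in the frame rotated by $\arctan 2$, the classical Serrin--Varberg-type fact that an absolutely continuous scalar function has vanishing derivative almost everywhere on the preimage of a Lebesgue-null set; this recovers the conclusion of Lemmas \ref{lem:projZeroMeasure}--\ref{lem:projZeroMeasure2} in one step, directly for absolutely continuous curves, with no reparametrization needed. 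Your route is shorter and more elementary granted that standard fact; the paper's route is self-contained and does not presuppose it. One last small point: your claim that $\frac{\dd}{\dd t}f(\gamma(t))=0$ a.e.\ on $B$ ``since $f\circ\gamma\equiv 0$ on $B$'' needs density points of $B$ (or, more simply, follows from $\dot\gamma(t)=0$ and Lipschitz continuity of $f$, which is the paper's argument for its set $E_2$); as stated it is a cosmetic gap, not a real one.
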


\begin{proof}
				We will prove that $\partialc f$ satisfies the chain rule along absolutely continuous curves \cite{davis2018stochastic,bolte2020conservative}, since $f$ is Lipschitz, this is sufficient to conclude, see also Lemma \ref{lem:setValuedMapping}.
				Let $\gamma \colon [0,1] \to \RR^2$ be an absolutely continuous curve and $R \subset [0,1]$ be the full measure set where $\gamma$ and $f \circ \gamma$ are differentiable, we will show that $\gamma$ satisfies the chain rule for almost all $t \in R$ which is sufficient to conclude. We set
				\begin{align*}
								E = \left\{ t \in R,\, \gamma(t) \not\in C \right\} 
				\end{align*}
				We also consider $\tilde{E} \subset E$ with
				\begin{align*}
								\tilde{E} = \left\{ t \in E,\, \max_{v \in \partialc f(\gamma(t))} \left\vert\left\langle v, \dot{\gamma}(t)\right\rangle - \frac{d}{dt} f(\gamma(t))\right\vert > 0\right\}.
				\end{align*}

				Fix $t \in \tilde{E}$ arbitrary, since $t \in E$, choosing $a$ small enough, we have by continuity of $\gamma$, $\gamma([t - a, t + a]) \cap C = \emptyset$ and  $\gamma([t - a, t + a]) \subset O$. Since $f$ is lower $C^1$ on $O$, by \cite[Theorem 10.31]{rockafellar1998variational}, $f$ is subdifferentially regular in a neighborhood of $\gamma([t-a, t + a]) \subset O$. We may apply \cite[Lemma 5.4]{davis2018stochastic} which shows that $f$ satisfies the chain rule along the curve $\gamma$ restricted to the closed segment $I = [t-a, t + a]$, in other words $[t-a, t + a]\cap \tilde{E}$ has measure zero. The segment can be taken of arbitrarily small length, therefore, such intervals form a Vitali covering set (see for example \cite[Section 6.2]{royden2010real}). By the Vitali covering theorem \cite[Section 6.2]{royden2010real} for any $\epsilon > 0$, there is a finite collection of such segments $I_1,\ldots,I_K$ such that $\tilde{E} \setminus \cup_{k=1}^K I_k$ has (outer) measure at most $\epsilon$ which shows that $\tilde{E}$ has (outer) measure arbitrary small and therefore has measure zero.


				Now set 
				\begin{align*}
								E_1 &= \left\{ t \in R,\, \gamma(t) \in C,\, \gamma'(t) \neq 0 \right\} \\
								E_2 &= \left\{ t \in R,\, \gamma(t) \in C,\, \gamma'(t) = 0 \right\}.
				\end{align*}
				We have $R = E \cup E_1 \cup E_2$, we have shown that the chain rule holds for almost all $t$ in $E$, Lemma \ref{lem:projZeroMeasure2} ensures that $E_1$ has zero measure and the chain rule holds trivially for all $t \in E_2$ because $f$ is Lipschitz. This shows that the chain rule holds for almost all $t \in \RR$ and the conclusion follows.
\end{proof}

\begin{figure}[t]
				\centering
				\includegraphics[width=.8\textwidth]{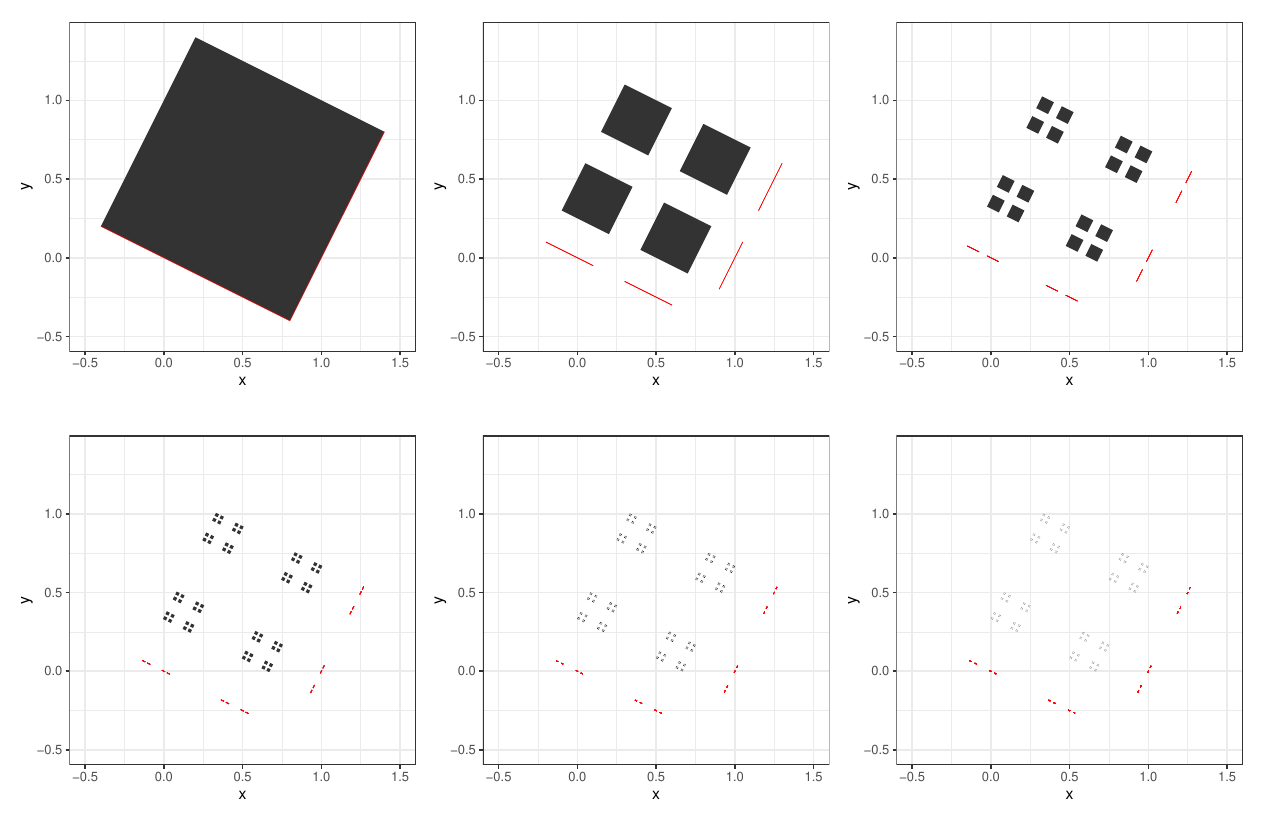}
				\caption{Same construction as in Figure \ref{fig:fractal2}, except that we keep the whole product set, the red line representing the projections on rotated axes and the black squares representing their products. In the limit, we obtain the same set $C$ as in Figure \ref{fig:fractal2} since the distance between the set represented here and those of Figure \ref{fig:fractal2} goes to zero.}
				\label{fig:fractal3}
\end{figure}
Now, we should characterize the subdifferential of $f$, which directly relates to the normal cone to $C$. We will use the notion of normal cone as described in \cite[Definition 6.3]{rockafellar1998variational}. As depicted in Figure \ref{fig:fractal3}, it turns out that $C$ is actually a product of Cantor set which are closed and have empty interior.
Using Lemma \ref{lem:cantorNormalCone} we obtain by \cite[Proposition 6.41]{rockafellar1998variational} that the normal cone to $C$ is $\RR^2$ everywhere on $C$. We deduce the following.
\begin{lemma}
				For all $z \in C$, $\partialc f(z) = B$, where $B$ is the unit ball.
				\label{lem:clarkeDistanceFun}
\end{lemma}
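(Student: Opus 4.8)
The plan is to exploit the classical link between the distance function and the normal cone, using the fact just established that $N_C(z) = \RR^2$ for every $z \in C$. Write $d_C = \dist(\cdot,C)$, so that $f = -d_C$. Since $d_C$ is $1$-Lipschitz, every element of $\partialc d_C(z)$ has norm at most $1$, and by the identity $\partialc(-g) = -\partialc g$ for the Clarke subdifferential we get $\partialc f(z) = -\partialc d_C(z) \subseteq B$. As $B$ is symmetric, it therefore suffices to prove $\partialc d_C(z) = B$ for every $z \in C$; the conclusion $\partialc f(z) = -B = B$ follows at once.

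For the equality $\partialc d_C(z) = B$ I would invoke the standard distance-function formula, valid for an arbitrary closed set $C$ and $z \in C$, namely $\partialc d_C(z) = N^c_C(z) \cap B$, where $N^c_C$ denotes the Clarke normal cone (see \cite{clarke1983optimization}, and the surrounding development in \cite{rockafellar1998variational}). Since the general normal cone is always contained in the Clarke normal cone, the equality $N_C(z) = \RR^2$ obtained above forces $N^c_C(z) = \RR^2$, whence $\partialc d_C(z) = \RR^2 \cap B = B$, as desired.

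If one prefers a self-contained argument, the reverse inclusion $B \subseteq \partialc f(z)$ can be produced directly from the description of the Clarke subdifferential as the convex hull of limiting gradients. At any $w \notin C$ whose nearest point $c \in C$ is unique, $f$ is differentiable with $\nabla f(w) = (c-w)/\|w-c\|$, a unit vector pointing from $w$ toward $c$, equivalently the negative of the proximal normal direction realized by $w$. Using the description of $C$ as a product of two Cantor sets (Figure \ref{fig:fractal3}), one checks that near any $z \in C$ there are points whose two-sided product gaps let these normalized directions sweep out a dense subset of the unit sphere: from a point at the corner of two gaps every direction in an open quadrant is attained, and corner points of all four sign types accumulate at $z$. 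Hence every unit vector is a limit of gradients $\nabla f(w_k)$ with $w_k \to z$, so the convex hull of these limits contains $\conv\{v,\ \|v\|=1\} = B$; combined with $\partialc f(z) \subseteq B$ this gives equality.

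The main obstacle is to use the correct notion of normal cone in the distance-function formula: the analogous identity for the limiting subdifferential, $\partial d_C(z) = N_C(z) \cap B$, is false in general — already for a closed subset of $\RR$ with isolated points the left-hand side can be a two-point set while the right-hand side is a full segment — so one must pass through the convexified (Clarke) normal cone, or equivalently close up under convex hull as in the direct argument. Once the right convexification is in place, the computation $N_C(z) = \RR^2$ does all the work.
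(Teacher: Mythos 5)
Your main argument (the second paragraph) rests on a false identity. For the Clarke objects, the equality $\partialc d_C(z) = N^c_C(z) \cap B$ does \emph{not} hold for general closed sets; only the inclusion $\partialc d_C(z) \subseteq N^c_C(z) \cap B$ does. Concretely, take $C = (\RR \times \{0\}) \cup (\{0\} \times \RR)$, the union of the two coordinate axes in $\RR^2$, and $z = 0$: then $d_C(x,y) = \min\{\vert x\vert, \vert y\vert\}$, whose gradients at nearby points of differentiability are exactly $\pm e_1, \pm e_2$, so $\partialc d_C(0) = \conv\{\pm e_1, \pm e_2\}$ is the unit cross-polytope; yet the limiting normal cone $N_C(0)$ is the union of the two axes, so the Clarke normal cone $N^c_C(0) = \cl\conv N_C(0) = \RR^2$ and $N^c_C(0) \cap B = B$, strictly larger. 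What is true at the Clarke level is only the conic relation $N^c_C(z) = \cl\bigl(\bigcup_{\lambda \geq 0} \lambda\, \partialc d_C(z)\bigr)$ \cite{clarke1983optimization}, which recovers the cone from the subdifferential but not conversely. Moreover, your closing paragraph has the status of the two formulas exactly backwards: the identity valid for every closed $C$ and $z \in C$ is the \emph{limiting} one, $\partial d_C(z) = N_C(z) \cap B$, which is precisely \cite[Example 8.53]{rockafellar1998variational} and is what the paper invokes. Your one-dimensional ``counterexample'' to it miscomputes the limiting subdifferential: for $C = \{0\} \subset \RR$ one has $\partial d_C(0) = [-1,1]$, not $\{\pm 1\}$, because the limiting subdifferential contains the regular subdifferential at the point itself, not merely limits of gradients from nearby points; so both sides are the full segment and the formula holds.

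The gap is easily repaired, and your fallback in fact saves the proof. Since Lemma \ref{lem:cantorNormalCone} together with \cite[Proposition 6.41]{rockafellar1998variational} give $N_C(z) = \RR^2$ for the limiting normal cone, the limiting formula yields $\partial d_C(z) = \RR^2 \cap B = B$, and then $\partialc d_C(z) \supseteq \partial d_C(z) = B$ because the Clarke subdifferential is the closed convex hull of the limiting one; combined with your (correct) first paragraph, this is exactly the paper's argument. Your third, self-contained argument is also essentially sound: the product structure $C = K_1 \times K_2$ decouples the metric projection, gap corners of all four sign types are dense in a product of perfect, totally disconnected sets, and gradients at points of a gap rectangle near such a corner sweep an open quadrant of unit directions, so the closed set of limiting gradients contains the whole unit circle and its convex hull is $B$. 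This reproduces, at the level of gradients of the distance function, the construction of proximal normals that the paper carries out in Lemma \ref{lem:cantorNormalCone}. In short, the conclusion and two of your three routes are fine, but the route you designate as primary invokes an identity that fails in general, and for a stated reason that is the reverse of the truth.
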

\begin{proof}
				As shown in Figure \ref{fig:fractal2}, $C$ is contained in a product of cantor sets, $\tilde{C}$. Actually $C$ is equal to a product of Cantor sets. Indeed, as detailed in Figure \ref{fig:fractal3}, the distance between the sets constructed in Figure \ref{fig:fractal2} and those of Figure \ref{fig:fractal3} decreases to $0$, so that limiting intersections are the same. By \cite[Proposition 6.41]{rockafellar1998variational} and using Lemma \ref{lem:cantorNormalCone}, $N_{\tilde{C}} = \RR^2$ everywhere on $\tilde{C}$ hence on $C$. The result follows from \cite[Example 8.53]{rockafellar1998variational} using the fact that the Clarke subdifferential is the convex closure of the limiting subdifferential and it commutes with multiplication by scalars.
\end{proof}

For the function $g$ in Theorem \ref{th:counterExample}, one could take for example $g\colon (x,y) \mapsto 2 f(x,y) + x$ which satisfies all the required properties
\begin{itemize}
				\item Since the projection of $C$ on $x$ is $[0,1]$ (Section \ref{sec:fractalSet}), we have $\max_{y \in [0,1]} g(x,y) = x$.
				\item Since for all but countably many $x$, the intersection of $C$ with the vertical line at $x$ is a singleton (Section \ref{sec:fractalSet}), for such $x$, the argmax is unique, for the others it is a pair.
				\item The sum rule holds for $f$, so for every $x \in [0,1]$ and $y \in \arg\max_{y \in [0,1]} g(x,y)$, we have $(x,y) \in C$ so $\partialc f(x,y) = B$, by Lemma \ref{lem:clarkeDistanceFun}. By sum rule, $\partialc g(x,y) = 2 B + (1,0)$ so that 
				\begin{align*}
								\left\{ v,\, (v,0) \in \partialc g(x,y),\, y \in \arg\max_{y \in [0,1]}  g(x, y) \right\} = [-1,3]
				\end{align*}
				which contains $0$.
\end{itemize}

\subsection{Need for a better behaved subclass}
\label{sec:needDiscussion}
In the definition of the function $g$, the fractal nature of the construction makes the argmin mapping, $P \colon \RR^p \rightrightarrows \RR^r$, although almost everywhere a singleton, highly non regular. In this example, it is not even of bounded variation in the sense that it is not possible to obtain bounded variation selections in $P$, a fortiori not absolutely continuous. This explains why conservativity is destroyed, the connection between $g$, $F$ and its value function $f$, through $P$ in the PO formula  takes place outside of the absolutely continuous world and hence outside of the conservative world which is built on absolutely continuous paths.  

Therefore additional restrictions on the function $F$ have to be enforced if one wants a calculus rule for the PO formula which preserves conservativity. An intuitive direction, is to ensure that the argmax mapping admits selections which are absolutely continuous, or close to absolute continuity, in order to apply and use the definition of conservativity in Definition \ref{def:conservativeMapForF}. There would be potentially many possible such classes, in Section \ref{sec:application} we focus on one of them, definable functions \cite{dries1996geometric,Cos99}, for which we have access to definable selections which are piecewise differentiable, which is close enough to absolute continuity for our purpose.

\section{Lemmas and proofs}
\label{sec:additionalLemmas}
\begin{proof}[Proof of Lemma \ref{lem:setValuedMapping}]
				The fact that (i) implies (ii) is direct by integration. Indeed,  $G\circ \gamma$ is absolutely continuous. For any $\gamma \colon [0,1] \to \RR^p$ absolutely continuous, with $\gamma(0) = \gamma(1)$, and measurable selection $V$ as in the statement of the Lemma,
				\begin{align*}
								\int_0^1 V(\gamma(t)) \dot{\gamma}(t) dt = \int_0^1 \frac{d}{dt}G(\gamma(t)) dt = G(\gamma(1)) - G(\gamma(0)) = 0,
				\end{align*}
				where the first inequality uses (i) and the fact that $V(\gamma(t)) \in J(t)$ for all $t$ and the second is absolute continuity of $G \circ \gamma$.

				For the reverse implication, fix $x \in \RR^p$, an absolutely continuous path $\gamma \colon [0,1] \to \RR^p$, $\gamma(0) = 0$, $\gamma(1) = x$ and a measurable selection $V \colon \RR^p \to \RR^p$ such that for all $z \in \RR^p$, $V \in J(z)$. We define 
				\begin{align*}
								G(x) = \int_0^1 V(t) \dot{\gamma}(t) dt.
				\end{align*}
				By (ii), the value of $G$ does not depend on the choice of $\gamma$ and on the measurable selection $V$. Furthermore, we have by Lebesgue differentiation theorem, for almost all $t \in [0,1]$, 
				\begin{align}
								\frac{d}{dt}G(\gamma(t)) =  V(t) \dot{\gamma}(t)
								\label{eq:lemSetValued1}
				\end{align}
				Now since $J$ is nonempty compact valued, from \cite[corollary 18.15]{aliprantis2005infinite}, there exists a sequence $(V_i)_{i \in \NN}$ of measurable selections such that for all $t \in [0,1]$
				\begin{align}
								J(\gamma(t)) = \mathrm{cl} \{V_i(\gamma(t))\}_{i \in \NN}.
								\label{eq:lemSetValued2}
				\end{align}
				The result follows by combining \eqref{eq:lemSetValued1} and \eqref{eq:lemSetValued2}.
\end{proof}

The following Lemma is essentially a repetition of \cite[Lemma 16]{rioszertuche2020pathological} which we reproduce here for completeness.
\begin{lemma}
				Let $C \subset \RR^2$ be a closed set which projections on the $x$ and $y$ axes have measure zero respectively. Then for any Lipschitz curve, $\gamma \colon \RR \to \RR^2$, the set
				\begin{align*}
								E = \left\{ t \in \RR, \gamma(t) \in C \right\} \setminus \left\{ t \in \RR, \gamma'(t) = 0 \right\}
				\end{align*}
				has measure zero.
				\label{lem:projZeroMeasure}
\end{lemma}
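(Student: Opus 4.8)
The plan is to decompose $E$ according to which coordinate of $\gamma$ is responsible for the nonvanishing of the velocity, and then to treat each piece with a one-dimensional change of variables argument. Write $\gamma = (\gamma_1, \gamma_2)$ and set $N_x = \proj_x(C)$, $N_y = \proj_y(C)$, both of Lebesgue measure zero by hypothesis. Since $C$ is closed and $\gamma$ continuous, $\gamma^{-1}(C)$ is closed, so $E$ is measurable after discarding the Rademacher null set where $\gamma'$ fails to exist. For almost every $t$ the derivative $\gamma'(t) = (\gamma_1'(t), \gamma_2'(t))$ exists, and for $t \in E$ we have $\gamma'(t) \neq 0$, hence $\gamma_1'(t) \neq 0$ or $\gamma_2'(t) \neq 0$. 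Accordingly, up to a null set, $E \subset E_1 \cup E_2$ where $E_1 = \{ t \in E : \gamma_1'(t) \neq 0 \}$ and $E_2 = \{ t \in E : \gamma_2'(t) \neq 0 \}$, and it suffices to show that each of $E_1$ and $E_2$ has measure zero.

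By symmetry I would treat $E_1$. On $E_1$ the first coordinate satisfies $\gamma_1(t) \in N_x$ and $\gamma_1'(t) \neq 0$. The tool I would invoke is the one-dimensional area formula for the Lipschitz function $h = \gamma_1 \colon \RR \to \RR$ (see \cite{evans2015measure}): for every measurable set $A$,
\[
\int_A |h'(t)|\, dt = \int_{\RR} \# \{ t \in A : h(t) = y \}\, dy .
\]
Applying this with $A = E_1$ and using $h(E_1) \subset N_x$, the integrand on the right vanishes for $y \notin N_x$, so the right-hand side reduces to an integral over the null set $N_x$ and therefore equals zero. Hence $\int_{E_1} |\gamma_1'(t)|\, dt = 0$; since $|\gamma_1'(t)| > 0$ on $E_1$, this forces $|E_1| = 0$. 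The same argument with $h = \gamma_2$ and $N_y$ yields $|E_2| = 0$, and thus $|E| = 0$.

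The only delicate point is the passage through the area formula, which is precisely what makes the argument work despite the fact that $\gamma_1$ need not be injective. A naive Vitali covering of $E_1$ by intervals on which $\gamma_1$ expands by a factor at least $\delta$ would bound $\sum_k |I_k|$ by $\tfrac{1}{\delta}\sum_k |\gamma_1(I_k)|$, but the images $\gamma_1(I_k)$ may overlap arbitrarily and so cannot be controlled by the measure of a small neighborhood of $N_x$. The area formula resolves exactly this difficulty by weighting overlaps through the Banach indicatrix $y \mapsto \# \{ t \in A : h(t) = y \}$, so that a null image set still forces the preimage, restricted to $\{h' \neq 0\}$, to be null. Everything else is routine measure theory.
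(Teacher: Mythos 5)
Your proof is correct, and its skeleton matches the paper's: both discard the Rademacher null set, split $E$ according to which coordinate of $\gamma'$ is nonzero, and conclude from $\int_{E_1}\vert\gamma_1'(t)\vert\,dt=0$ together with $\vert\gamma_1'\vert>0$ on $E_1$ that each piece is null. The genuine difference lies in how the non-injectivity of $\gamma_1$ is handled, which is exactly the point you single out as delicate. The paper never uses the Banach indicatrix; instead it manufactures injectivity: it enumerates the intervals $I_j$ with rational endpoints, takes the measurable selection $f_j^1(t)=\min\{s\in I_j:\gamma_1(s)=\gamma_1(t)\}$, and works on the sets $Q_j^1=\{t\in I_j : t=f_j^1(t)\}\cap\{\gamma_1'\neq 0\}\cap\gamma_1^{-1}(N_x)$, on which $\gamma_1$ is injective by construction; the injective case of the change-of-variables formula (the same Theorem 3.8 of \cite{evans2015measure} that you invoke) then gives $\int_{Q_j^1}\vert\gamma_1'(t)\vert\,dt=\vert\gamma_1(Q_j^1)\vert\le\vert N_x\vert=0$, and the observation that $\gamma_1'(t)\neq 0$ forces the fiber of $\gamma_1(t)$ within a sufficiently small rational interval around $t$ to reduce to $\{t\}$ shows that the countably many null sets $Q_j^1$ cover $E_1$. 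Your single application of the area formula with counting multiplicity absorbs this entire construction: overlapping images are weighted rather than excluded, so no injectivity decomposition, measurable selection, or covering argument is needed. What the paper's route buys is that it only relies on the injective (single-valued) change of variables, at the price of that bookkeeping; what yours buys is brevity, at the price of invoking the stronger multiplicity statement (whose content includes measurability of the indicatrix). Both arguments are complete.
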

\begin{proof}
				Write $\gamma_1$, $\gamma_2$ the coordinates of $\gamma$ and $P_1$, $P_2$ the projection of $C$ on the $x$ and $y$ axes respectively, by hypothesis they have measure zero. Set $B\subset \RR$ the set where $\gamma'$ is either not defined or well defined and different from zero. We have $E = \gamma^{-1}(C) \cap B$.

				Set $A_1 \subset B$ and $A_2\subset B$ the sets where $\gamma'_1 \neq 0$ and $\gamma'_2 \neq 0$ respectively. Set $A_3 \subset B$ the zero measure set where $\gamma'$ is not defined. We have that, $A_1 \cup A_2 \cup A_3= B$.

				Consider an enumeration of intervals $(I_j)_{j\in \NN}$ of the form $[p_j, q_j]$, with $p_j,q_j \in \QQ$ and $p_j < q_j$ for $j \in \NN$. Fix $j \in \NN$ and $i = 1,2$ and consider the following function on $I_j$
				\begin{align*}
								f_j^i \colon t \rightarrow \min_{s \in I_j,\, \gamma_i(s) = \gamma_i(t)}s.
				\end{align*}
				The set valued function $t \rightrightarrows \left\{ s \in I_j,\, \gamma_i(s) = \gamma_i(t) \right\}$ has closed graph and nonempty compact values on $I_j$ and hence $f_j^i$ is measurable by \cite[Theorems 18.19 and 18.20]{aliprantis2005infinite}. Set
				\begin{align*}
								Q_j^i = \left\{ t \in I_j, t = f_j^i(t) \right\} \cap A_i \cap \gamma_i^{-1}(P_i).
				\end{align*}
				$Q_j^i$ is measurable and $\gamma_i$ is injective on $Q_i^j$ by construction and $\gamma_i(Q_j^i) \subset P_i$ .

				Using the injectivity of $\gamma_i$ on $Q_j^i$ and a change of variable formula \cite[Theorem 3.8]{evans2015measure}, we have for any $j \in \NN$ and $i = 1,2$,
				\begin{align*}
								0 \leq \int_{Q_j^i} \vert\gamma_i'(t)\vert dt = \int_{\gamma_i(Q_j^i)} 1 dt \leq \int_{P_i} 1 dt	 = 0
				\end{align*}
				Since  $\vert\gamma_i'(t)\vert>0$ on $Q_j^i$, for any $j \in \NN$ and $i = 1,2$, this means that $Q_j^i$ has measure zero.

				Now consider $t \in \gamma_i^{-1}(P_i) \cap A_i$, this means that there exists $p \in P_i$ such that $t \in \gamma_i^{-1}(p) \cap A_i$. Since $t \in A_i$ this means that $\gamma'_i(t) \neq 0$ and so there exists an interval $I_j$ containing $t$, such that $\{t\} = \left\{ s \in I_j, \gamma_i(s) = \gamma_i(t) \right\}$. This shows that $t \in Q_j^i$ and since $t$ was arbitrary, we have that $\gamma_i^{-1}(P_i) \cap A_i = \cup_{j \in \NN} Q_j^i$ and hence $\gamma_i^{-1}(P_i) \cap A_i$ has zero measure as the countable union of zero measure sets.

				We have
				\begin{align*}
								E = \gamma^{-1}(C) \cap B &= \gamma^{-1}(C) \cap \left( A_1 \cup A_2 \cup A_3 \right) \\
								&\subset \gamma_1^{-1}(P_1) \cap \gamma_2^{-1}(P_2) \cap \left( A_1 \cup A_2 \cup A_3 \right)\\
								&\subset (\gamma_1^{-1}(P_1) \cap A_1) \cup (\gamma_2^{-1}(P_2) \cap A_2) \cup A_3.
				\end{align*}
				All three sets on the right hand side have zero measures so $E$ has zero measure.
\end{proof}

\begin{lemma}
				The result of Lemma \ref{lem:projZeroMeasure} holds for any absolutely continuous curve $\gamma \colon [0,1] \to \RR^2$.
				\label{lem:projZeroMeasure2}
\end{lemma}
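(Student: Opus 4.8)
The plan is to reduce the statement to the Lipschitz case already settled in Lemma \ref{lem:projZeroMeasure} by reparametrizing $\gamma$ by arc length. Since $\gamma \colon [0,1]\to \RR^2$ is absolutely continuous, $\gamma'$ is integrable and the arc-length function $s(t) = \int_0^t \|\gamma'(\tau)\|\,\dd \tau$ is nondecreasing, absolutely continuous, with $s'(t) = \|\gamma'(t)\|$ for almost all $t$. Setting $L = s(1)$, there is a unique $1$-Lipschitz curve $\tilde\gamma \colon [0,L] \to \RR^2$ such that $\gamma = \tilde\gamma \circ s$ (the standard arc-length reparametrization of a rectifiable curve). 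First I would record the chain rule $\gamma'(t) = \tilde\gamma'(s(t))\, s'(t)$, valid for almost all $t$, for the composition of the Lipschitz map $\tilde\gamma$ with the monotone absolutely continuous map $s$.

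Applying Lemma \ref{lem:projZeroMeasure} to the Lipschitz curve $\tilde\gamma$ shows that the set $\tilde E = \{\sigma \in [0,L]\colon \tilde\gamma(\sigma)\in C\}\setminus\{\sigma\colon \tilde\gamma'(\sigma)=0\}$ has measure zero. I would then transfer this back to $\gamma$. Discarding a null set, for $t\in E$ we have $\gamma(t) = \tilde\gamma(s(t)) \in C$ and, since $\gamma'(t)\neq 0$ forces $\tilde\gamma'(s(t))\neq 0$ through the chain rule, $s(t)\in \tilde E$; hence $s(E)\subset \tilde E$ is null. Moreover $s$ is injective on $E$: if $t_1<t_2$ lie in $E$ with $s(t_1)=s(t_2)$, then $s$ is constant on $[t_1,t_2]$, so its left derivative at $t_2$ vanishes, contradicting $s'(t_2)=\|\gamma'(t_2)\|\neq 0$. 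The change-of-variables formula for the monotone absolutely continuous function $s$, injective on $E$, then gives $\int_E \|\gamma'(t)\|\,\dd t = \int_E s'(t)\,\dd t = \mathcal{L}^1(s(E)) = 0$, and since $\|\gamma'\|>0$ on $E$ this forces $E$ to have measure zero, as claimed.

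Alternatively, one may rerun the proof of Lemma \ref{lem:projZeroMeasure} verbatim: continuity and almost-everywhere differentiability of $\gamma$, which absolute continuity provides, are all that is used, except for the single change-of-variables identity $\int_{Q_j^i}\vert\gamma_i'\vert\,\dd t = \mathcal{L}^1(\gamma_i(Q_j^i))$ on the sets $Q_j^i$ where the coordinate $\gamma_i$ is injective. I expect the main obstacle to be purely measure-theoretic: justifying this change-of-variables identity for absolutely continuous, rather than Lipschitz, real functions, which hinges on the Luzin (N) property of absolutely continuous functions, and, in the reparametrization route, justifying the chain rule for the composition $\tilde\gamma\circ s$ of a Lipschitz map with a monotone absolutely continuous map. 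Both are classical facts of real analysis that must be invoked in place of the Lipschitz change-of-variables formula used in Lemma \ref{lem:projZeroMeasure}.
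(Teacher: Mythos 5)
Your proof is correct, and it shares the paper's overall strategy --- reduce to Lemma \ref{lem:projZeroMeasure} by rewriting $\gamma$ as a Lipschitz curve composed with a time change --- but the execution is genuinely different. The paper invokes \cite[Lemma 1.1.4]{ambrosio2008gradient}, whose time change is not arc length but (essentially) $s(t)=\int_0^t(1+\|\gamma'(\tau)\|)\,\dd\tau$: the added drift makes $s$ strictly increasing, so it has a Lipschitz inverse $t$, and the identity $\|\hat{\gamma}'\|\circ s = \|\gamma'\|/(1+\|\gamma'\|)$ comes packaged with the citation. The transfer back is then a one-liner: the bad set for $\gamma$ is contained in the image under the Lipschitz map $t$ of a null set, hence null. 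Your arc-length parametrization instead degenerates exactly where $\gamma'=0$, so no global inverse exists, and you must do the transfer by hand: injectivity of $s$ on $E$ (your left-derivative argument is the right one), the a.e.\ chain rule for a Lipschitz map composed with a monotone absolutely continuous map (Serrin--Varberg type; note that on $E$ one has $s'>0$ a.e., and $\tilde\gamma'$ exists at $s(t)$ for a.e.\ such $t$ because $\int_{s^{-1}(N)}s'\leq \mathcal{L}^1(N)=0$ for any null $N$), and the Banach-indicatrix identity $\int_E s' = \mathcal{L}^1(s(E))$ valid for absolutely continuous $s$ injective on $E$. These are exactly the classical facts you flag; they are all true, so the argument goes through. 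What the paper's choice buys is brevity, since the drift term removes the degeneracy and none of this machinery is needed; what your route buys is self-containedness modulo textbook real analysis. Your alternative route --- rerunning the proof of Lemma \ref{lem:projZeroMeasure} with the area formula for absolutely continuous functions (which rests on the Luzin (N) property) in place of the Lipschitz change of variables \cite[Theorem 3.8]{evans2015measure} --- is arguably the most direct of all, since Lipschitzness enters that proof only through that single step.
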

\begin{proof}
				From \cite[Lemma 1.1.4]{ambrosio2008gradient}, $\gamma$ admits a Lipschitz reparametrization. That is, there exists an increasing absolutely continuous function $s \colon \RR \to \RR$ with Lipschitz inverse $t$ and a Lipschitz curve $\hat{\gamma}$, such that
				\begin{align}
								\hat{\gamma} &= \gamma \circ t \nonumber\\
								\|\hat{\gamma}'\| \circ s &= \frac{\|\gamma'\|}{1 +  \|\gamma'\|} 
								\label{eq:ambrosio}
				\end{align}
				where the second identity holds almost everywhere. Lemma \ref{lem:projZeroMeasure2} holds true for $\hat{\gamma}$. We have for almost all $a$ using \eqref{eq:ambrosio} and the fact that $t$ is the inverse of $s$,
				\begin{align*}
								& \hat{\gamma}(a) \in C,\, \|\hat{\gamma}'\|(a) \neq 0 \\
								\Leftrightarrow \qquad& \gamma(t(a)) \in C,\, \|\hat{\gamma}'\| \circ s (t(a)) \neq 0 \\ 
								\Leftrightarrow \qquad& \gamma(t(a)) \in C,\, \|\gamma'(t(a))\|  \neq 0 .
				\end{align*}
				Set $A$ the full measure set where the previous equivalence holds, we have 
				\begin{align*}
								\left\{t,\, \gamma(t) \in C,\, \|\gamma'(t)\|  \neq 0 \right\} \subset t\left( \left\{ a,\, \hat{\gamma}(a) \in C,\, \|\hat{\gamma}'\|(a) \neq 0 \right\} \right) \cup t(A^c).
				\end{align*}
				The set $\left\{ s,\, \hat{\gamma}(s) \in C,\, \|\hat{\gamma}'\|(s) \neq 0 \right\}$ has measure zero by Lemma \ref{lem:projZeroMeasure2} because $\hat{\gamma}$ is Lipschitz. 
				The Lebesgue measure of the image of a zero measure set by a Lipschitz map is zero, and, since $t$ is Lipschitz, the right hand side has measure zero. This is the desired result.
\end{proof}

\begin{lemma}
				Let $C \subset \RR$ be a closed set with empty interior. Then for all $x \in C$, $N_C(x) = \RR$.
				\label{lem:cantorNormalCone}
\end{lemma}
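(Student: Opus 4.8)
The plan is to work with the general (limiting) normal cone of \cite[Definition 6.3]{rockafellar1998variational}, which is obtained as the outer limit of the regular normal cones $\widehat{N}_C$, and to show $N_C(x)=\RR$ by producing, for every $x \in C$, sequences of nearby points of $C$ whose \emph{regular} normal cones already contain a full half-line. Since $N_C(x)$ is a cone containing $0$, it suffices to prove $(-\infty,0] \subseteq N_C(x)$ and $[0,+\infty) \subseteq N_C(x)$ separately.

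The key local computation is the following. Call $b \in C$ a \emph{right endpoint} if there is $a < b$ with $(a,b) \cap C = \emptyset$, and symmetrically define a \emph{left endpoint}. At a right endpoint $b$, every point of $C$ sufficiently close to $b$ lies in $[b,+\infty)$, so for $x' \in C$ near $b$ one has $x'-b \ge 0$; consequently any $v \le 0$ satisfies $v(x'-b) \le 0$, whence $\langle v, x'-b\rangle \le o(|x'-b|)$ and $(-\infty,0] \subseteq \widehat{N}_C(b)$. By the mirror argument, $[0,+\infty) \subseteq \widehat{N}_C(a)$ at any left endpoint $a$.

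Then I would fix $x \in C$ and use that $C$ is nowhere dense (closed with empty interior) to guarantee gap points on each side of $x$. For every $k \in \NN$ the open interval $(x-1/k,x)$ cannot be contained in $C$, so it contains a point $y_k \notin C$; letting $(a_k,b_k)$ be the connected component of $\RR \setminus C$ containing $y_k$, the facts that $x \in C$ and $y_k < x$ force $b_k \in (x-1/k,x] \cap C$, a right endpoint. Thus $b_k \to x$ with $(-\infty,0]\subseteq \widehat{N}_C(b_k)$ for every $k$, and taking $v_k \equiv v$ for any $v \le 0$ in the definition of the limiting normal cone yields $v \in N_C(x)$; hence $(-\infty,0]\subseteq N_C(x)$. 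Running the identical argument on $(x,x+1/k)$ produces left endpoints accumulating at $x$ and gives $[0,+\infty)\subseteq N_C(x)$, so $N_C(x)=\RR$.

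The main obstacle is only bookkeeping around the gap endpoints: one must check that the chosen complementary component has its relevant endpoint finite and lying in $C$ (which follows from maximality of the component together with $x \in C$ bounding the gap on the appropriate side), and that the empty-interior hypothesis is precisely what prevents a one-sided interval $(x-1/k,x)$ from being entirely absorbed into $C$. Everything else reduces to the elementary regular-normal-cone estimate above combined with the definition of $N_C$ as an outer limit of $\widehat{N}_C$.
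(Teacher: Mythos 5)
Your proof is correct, and at heart it follows the same strategy as the paper's: the empty-interior hypothesis forces gap-bordering points of $C$ arbitrarily close to $x$, at such points a full half-line of one-sided normals exists, and the outer-limit structure of $N_C$ transfers these normals to $x$. The implementations differ, however. The paper produces its gap-bordering points via the distance function: either $(x,x+\epsilon_k)$ misses $C$ entirely (a separate case, where $+1$ is already a proximal normal at $x$ itself), or it picks $z_k\in(x,x+\epsilon_k)\cap C$, maximizes $\dist(\cdot,C)$ over $[x,z_k]$ (the maximum being positive precisely because of empty interior), and takes the left projection $x_k$ of the maximizer, at which $+1$ is a proximal normal; the limit passage $1\in N_C(x_k)$, $x_k\to x$, hence $1\in N_C(x)$, then rests on outer semicontinuity of the limiting normal cone. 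You instead take the connected component $(a_k,b_k)$ of $\RR\setminus C$ through a point of $(x-1/k,x)$, observe that its right endpoint $b_k$ is finite, lies in $C\cap(x-1/k,x]$, and borders a gap on its left, and check $(-\infty,0]\subseteq\widehat N_C(b_k)$ directly from the $o(\cdot)$ estimate defining the regular normal cone, so that $v\in N_C(x)$ is literally an instance of the definition of the limiting cone in \cite[Definition 6.3]{rockafellar1998variational}. Your route buys a uniform argument with no case distinction and no distance-function, argmax, or projection apparatus, and its limit step needs nothing beyond the definition of $N_C$ as an outer limit of regular cones; the paper's route stays phrased in terms of projections and proximal normals, which matches the toolkit it uses in the surrounding construction. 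Both arguments are complete and rigorous.
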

\begin{proof}
				Denote by $f$ the distance function to $C$. $f$ is $1$-Lipschitz and hence differentiable almost everywhere. Furthermore, $f(x) = 0$ if and only if $x \in C$ since $C$ is closed. Fix $x \in C$ and construct a sequence $(x_k)_{k \in \NN}$ as follows, choose $\epsilon_k = 1 / (k+1)$ for $k \in \NN$, 
				\begin{itemize}
								\item If $(x,x + \epsilon_k) \cap C = \emptyset$, then $x_k = x$ and we have that $1 \in N_C(x_k)$.
								\item Otherwise choose $z_k \in (x,x + \epsilon_k) \cap C$ and $y_k = \arg\max_{y \in [x, z_k]} f(y)$ where the maximum is positive by closedness of $C$ and the fact that $C$ has empty interior. $y_k$ has a projection on the left and on the right on $C$ (otherwise it cannot be in the argmax). Choose $x_k$ to be the projection on the left. We have that $1 \in N_C(x_k)$, $x \leq x_k \leq x + \epsilon_k$.
				\end{itemize}
				In all cases we have $x_k \to x$ and $1 \in N_C(x_k)$ which shows that $1 \in N_C(x)$. Similarly one could show that $-1 \in N_C(x)$ and hence $N_C(x) = \RR$ since it is a cone. Since $x$ was arbitrary in $C$, this proves the desired result.
\end{proof}

\begin{lemma}
				Let $J \colon \RR^p \to \RR^{m}$, be a definable compact valued map with nonempty values. Then there exists a sequence of definable selections $(V_i)_{i \in \NN}$ for $J$ such that for all $x \in \RR^p$
				\begin{align*}
								J(x) = \mathrm{cl} \{V_i(x)\}_{i \in \NN}.
				\end{align*}
				\label{lem:definableSelectors}
\end{lemma}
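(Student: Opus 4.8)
The plan is to establish a definable analogue of the Castaing representation (the measurable version \cite[Corollary 18.15]{aliprantis2005infinite} invoked earlier in the proof of Lemma \ref{lem:setValuedMapping}), replacing measurable selection theorems by definable choice \cite[4.5]{dries1996geometric}. The guiding idea is to index the selections by a fixed countable dense subset of $\RR^m$ and, for each index, to pick a nearest point of $J(x)$ to that reference point; density of the reference points together with the nearest-point property will force the selected values to be dense in every fiber $J(x)$.

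Concretely, first I would fix an enumeration $(q_i)_{i \in \NN}$ of $\QQ^m$, which is dense in $\RR^m$. For each $i$, consider the set-valued map
\[
 W_i \colon x \rightrightarrows \arg\min_{z \in J(x)} \|z - q_i\|^2 = \Big\{ z \in J(x) :\ \|z - q_i\|^2 \le \|w - q_i\|^2 \ \ \forall w \in J(x) \Big\}.
\]
Its graph is cut out by a first order formula involving the definable set $\mathrm{graph}\, J$ and the polynomial $z \mapsto \|z - q_i\|^2$, hence $W_i$ is definable by Proposition \ref{prop:quantifierElimination} (using the squared norm avoids any issue with the square root and yields the same minimizers). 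Since $J(x)$ is nonempty and compact, the continuous function $z \mapsto \|z - q_i\|^2$ attains its minimum on $J(x)$, so $W_i$ has nonempty compact values on all of $\RR^p$. Definable choice \cite[4.5]{dries1996geometric} then yields a definable map $V_i \colon \RR^p \to \RR^m$ with $V_i(x) \in W_i(x) \subset J(x)$ for every $x$; in particular each $V_i$ is a definable selection of $J$, and the family $(V_i)_{i \in \NN}$ is countable.

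It remains to check that $J(x) = \cl\{V_i(x)\}_{i \in \NN}$ for every $x$. The inclusion $\cl\{V_i(x)\}_{i\in\NN} \subset J(x)$ is immediate since $V_i(x) \in J(x)$ and $J(x)$ is closed. For the reverse inclusion, fix $z \in J(x)$ and $\delta > 0$, and choose $i$ with $\|z - q_i\| < \delta / 2$. Because $V_i(x)$ minimizes the distance to $q_i$ over $J(x)$ and $z \in J(x)$, we get $\|V_i(x) - q_i\| \le \|z - q_i\| < \delta/2$, and hence $\|V_i(x) - z\| \le \|V_i(x) - q_i\| + \|q_i - z\| < \delta$. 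Thus $z \in \cl\{V_i(x)\}_{i\in\NN}$, proving the claim. The only steps requiring genuine care are the definability of the $\arg\min$ correspondence, which is handled by quantifier elimination, and the correct invocation of definable choice over the full domain $\RR^p$ (licensed by the nonemptiness of the values of $W_i$); once these are secured the density argument is elementary and the result follows.
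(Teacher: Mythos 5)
Your proof is correct, but it follows a genuinely different route from the paper's. The paper constructs the sequence recursively and greedily: $V_1$ is an arbitrary definable selection of $J$, and for $i \geq 2$, $V_i$ is a definable selection of the farthest-point map $x \rightrightarrows \arg\max_{v \in J(x)} \dist\left(v, \{V_j(x)\}_{j=1}^{i-1}\right)$; density of $\{V_i(x)\}_{i \in \NN}$ in $J(x)$ then follows from a covering argument using compactness (if the maximal distance did not tend to zero, the selected points would contain an infinite $\epsilon$-separated subset of the compact fiber). You instead transplant the classical Castaing-representation construction into the definable category: the selections are indexed by an enumeration of $\QQ^m$, each $V_i$ is a definable nearest-point selection relative to $q_i$, and density follows from an elementary triangle-inequality argument. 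Both proofs rest on the same two o-minimal ingredients, namely quantifier elimination (Proposition \ref{prop:quantifierElimination}) for definability of the argmin/argmax correspondence and definable choice \cite[4.5]{dries1996geometric} for extracting the selections, and both are complete. Your version has the advantages that the selections are defined independently of one another (no recursion), that the density argument needs no covering step, and that it works verbatim for nonempty closed (not necessarily bounded) values, since nearest points to a fixed reference point always exist in a nonempty closed subset of $\RR^m$. The paper's greedy construction is intrinsic (it references no external dense set) and directly yields the quantitative statement $\max_{v \in J(x)} \dist\left(v, \{V_j(x)\}_{j=1}^{i-1}\right) \to 0$, which is the exact form reused in the proof of Lemma \ref{lem:definableSelectors2}; under your construction this uniform prefix property still follows from fiberwise density together with compactness of $J(x)$, so nothing downstream is lost.
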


\begin{proof}
				Let $V_1$ be any definable selection of $J$, such a $V_1$ exists by \cite[4.5]{dries1996geometric}. Set by recursion, for $i = 2,\ldots$, $V_i$ a definable selection of
				\begin{align*}
								x \rightrightarrows \arg\max_{v \in J(x)} \mathrm{dist}\left( v, \left\{ V_j(x) \right\}_{j=1}^{i-1} \right).
				\end{align*}
				which is definable. By a simple covering argument, using compacity, for all $x$
				\begin{align*}
								\lim_{i \to \infty} \max_{v \in J(x)} \mathrm{dist}\left( v, \left\{ V_j(x) \right\}_{j=1}^{i-1} \right) = 0,
				\end{align*}
				which shows that the constructed sequence has the desired property.
\end{proof}

\begin{lemma}
				Using the notations of Theorem \ref{th:definableSelection}, there is a sequence of definable selections, $(y_i)_{i \in\NN}$ such that for all $i \in \NN$, and all $x \in \RR^p$, $y_i(x) \in P(x)$ and 
				\begin{align*}
								D_f(x) = \left\{ u,\  \exists y \in P(x),\,\, (u,0) \in D(x,y) \right\} = \mathrm{cl} \left\{ u,\   (u,0) \in D(x,y_i(x)) \right\}.
				\end{align*}
				\label{lem:definableSelectors2}
\end{lemma}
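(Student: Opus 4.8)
The plan is to realise $D_f$ through selections of a single joint set-valued map rather than through selections of $P$ alone. I would introduce
\[
\Psi \colon \RR^p \rightrightarrows \RR^r \times \RR^p, \qquad \Psi(x) = \left\{ (y,u) \,:\, y \in P(x),\ (u,0) \in D(x,y) \right\},
\]
and first record that $\Psi$ inherits all the regularity needed to apply Lemma~\ref{lem:definableSelectors}. It is definable by Proposition~\ref{prop:quantifierElimination}, its graph being cut out by the definable conditions $y \in P(x)$ and $(u,0) \in D(x,y)$. It has closed graph, since a limit $(x_n,y_n,u_n) \to (x,y,u)$ with $y_n \in P(x_n)$ and $(u_n,0) \in D(x_n,y_n)$ yields $y \in P(x)$ and $(u,0) \in D(x,y)$ by closedness of the graphs of $P$ and $D$. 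It is locally bounded because $P$ is locally bounded and $D$ is locally bounded on the relevant compact sets, so its values are compact; and they are nonempty, which is exactly the standing hypothesis that for every $x$ and every $y \in P(x)$ there is $u$ with $(u,0) \in D(x,y)$.

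I would then apply Lemma~\ref{lem:definableSelectors} to $\Psi$ (with codomain dimension $m = r+p$) to obtain definable selections $w_i = (y_i, u_i)$, $i \in \NN$, with $\Psi(x) = \mathrm{cl}\{(y_i(x), u_i(x))\}_{i \in \NN}$ for every $x$; in particular each $y_i$ is a definable selection of $P$. It remains to establish the equality $D_f(x) = \mathrm{cl}\{u : \exists i \in \NN,\ (u,0) \in D(x, y_i(x))\}$. The inclusion $\supseteq$ is immediate: for each $i$ one has $y_i(x) \in P(x)$, so every $u$ with $(u,0) \in D(x, y_i(x))$ lies in $D_f(x)$; and $D_f(x)$ is itself closed, since a convergent sequence $u_n \to u$ with witnesses $y_n \in P(x)$ and $(u_n,0) \in D(x,y_n)$ admits, by compactness of $P(x)$ and closedness of the graph of $D$, a limiting witness $y \in P(x)$ with $(u,0) \in D(x,y)$.

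For the reverse inclusion I would take $u \in D_f(x)$ and pick $y \in P(x)$ with $(u,0) \in D(x,y)$, so that $(y,u) \in \Psi(x)$. By density of the selections there is a subsequence $(y_{i_k}(x), u_{i_k}(x)) \to (y,u)$; since $(u_{i_k}(x),0) \in D(x, y_{i_k}(x))$ by construction of $\Psi$, each $u_{i_k}(x)$ belongs to the fibre $\{u' : (u',0) \in D(x, y_{i_k}(x))\}$, and $u_{i_k}(x) \to u$, whence $u$ lies in the claimed closure. The main subtlety — and precisely the reason selections of $P$ alone do not suffice — appears here: the map $y \mapsto D(x,y)$ is only outer semicontinuous through its closed graph, not inner semicontinuous, so merely approximating $y$ by selection values $y_i(x)$ gives no control on the fibres $D(x,y_i(x))$. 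Passing through the joint map $\Psi$ circumvents this obstacle, because density is applied to the pairs $(y,u)$ and the convergence $u_{i_k}(x) \to u$ in the second coordinate is then automatic.
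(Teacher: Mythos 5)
Your proof is correct, and it rests on the same key ingredient as the paper's: Lemma~\ref{lem:definableSelectors}, which furnishes countably many definable selections that are pointwise dense in a definable compact-valued map. The only structural difference is where density is taken. The paper applies Lemma~\ref{lem:definableSelectors} to $D_f$ itself (compact-valued and definable by Proposition~\ref{prop:quantifierElimination}), obtaining dense definable selections $u_i$ of $D_f$, and then invokes definable choice \cite[4.5]{dries1996geometric} a second time to produce witnesses $y_i$ with $y_i(x) \in P(x)$ and $(u_i(x),0) \in D(x,y_i(x))$; since each fibre $\left\{ u : (u,0) \in D(x,y_i(x)) \right\}$ contains $u_i(x)$ and is contained in the closed set $D_f(x)$, the stated equality follows. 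You instead apply Lemma~\ref{lem:definableSelectors} to the joint map $\Psi(x) = \left\{ (y,u) : y \in P(x),\ (u,0) \in D(x,y) \right\}$ and project, so the witnesses come packaged with the dense selections, at the cost of verifying that $\Psi$ has nonempty, compact, definable values (which you do correctly; note that closedness of the graph of $\Psi$ is not even needed to invoke the lemma). The two routes are interchangeable and of essentially equal length: yours trades one explicit appeal to definable choice for the regularity check on $\Psi$. Your closing observation --- that $y \mapsto D(x,y)$ is only outer semicontinuous, so density must be taken jointly in $(y,u)$ rather than in $y$ alone --- is exactly the reason the paper also selects the witnesses $y_i$ \emph{after} fixing the dense selections $u_i$ of $D_f$, rather than starting from selections of $P$.
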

\begin{proof}
				Note that $D_f$ has compact values and is definable by Proposition \ref{prop:quantifierElimination}. Using Lemma \ref{lem:definableSelectors} we have a definable sequence $(u_i)_{i \in \NN}$ of selections in $D_f$ such that $D_f(x) = \mathrm{cl} \{u_i(x)\}_{i \in \NN}$. By definability, for $i \in \NN$ using \cite[4.5]{dries1996geometric}, we can choose a definable sequence $(y_i)_{i \in \NN}$ such that for all $x \in \RR^p$,
				\begin{align*}
								y_i(x) &\in P(x)\\
								(u_i(x),0) &\in D(x,y_i(x)). 
				\end{align*}
				We have for all $x \in \RR^p$
				\begin{align*}
								\lim_{i \to \infty} \max_{u \in D_f(x)} \mathrm{dist}\left( u, \left\{ \tilde{u},\, (\tilde{u},0) \in D(x,y_j(x))  \right\}_{j=1}^{i-1} \right) \leq \lim_{i \to \infty} \max_{u \in D_f(x)} \mathrm{dist}\left( u, \left\{ u_j(x)\right\}_{j=1}^{i-1} \right) =  0,
				\end{align*}
				which shows that the constructed sequence has the desired property.

\end{proof}

\bigskip

\noindent
{\bf Acknowledgments.} 
The author would like to thank J\'er\^ome Bolte and Rodolfo Rios-Zeruche for interesting discussions which helped putting this work together.
The author acknowledge the support of ANR-3IA Artificial and Natural Intelligence Toulouse Institute under the grant agreement ANR-19-PI3A-0004,
Air Force Office of Scientific Research, Air Force Material Command, USAF, under grant numbers FA9550-19-1-7026, FA8655-22-1-7012, 
and ANR MaSDOL - 19-CE23-0017-01.

\end{document}